\pgfplotsset{width=10cm,compat=1.9}
\g@addto@macro\bfseries{\boldmath}
\def\@setauthors{%
  \begingroup
  \def\thanks{\protect\thanks@warning}%
  \trivlist
  \centering\footnotesize \@topsep30\p@\relax
  \advance\@topsep by -\baselineskip
  \item\relax
  \author@andify\authors
  \def\\{\protect\linebreak}

  \normalsize\lowercase{\authors}%
  
	\ifx\@empty\contribs
  \else
    ,\penalty-3 \space \@setcontribs
    \@closetoccontribs
  \fi
  \endtrivlist
  \endgroup
}
\def\@settitle{\begin{center}
\LARGE\lowercase{\@title}
  \end{center}%
}
\newcommand{\authoremail}[1]{\email{\href{mailto:#1}{\color{lightblue}{#1}}}}
\newcommand{\authoraddress}[1]{\address{\normalfont{#1}}}
\numberwithin{equation}{section}
\newtheorem{thm}{Theorem}[section]
\newtheorem{cor}[thm]{Corollary}
\newtheorem{prop}[thm]{Proposition}
\newtheorem{ques}[thm]{Question}
\renewcommand{\epsilon}{\varepsilon}
\newcommand{\rd}{\mathbb{R}^d}
\renewcommand{\geq}{\geqslant}
\renewcommand{\leq}{\leqslant}
\newcommand{\hd}{\dim_{\textup{H}}}
\newcommand{\frd}{\dim_{\textup{Fr}}}
\newcommand{\fs}{\dim^\theta_{\mathrm{F}}}
\newcommand{\fd}{\dim_{\mathrm{F}}}
\newcommand{\sd}{\dim_{\mathrm{S}}}
\newcommand{\R}{\mathbb{R}}
\renewcommand{\Re}{\text{Re\,}}
\renewcommand{\Im}{\text{Im\,}}
\newcommand{\J}{\mathcal{J}}
\DeclareRobustCommand\widecheck[1]{{\mathpalette\@widecheck{#1}}}
\def\@widecheck#1#2{%
    \setbox\z@\hbox{\m@th$#1#2$}%
    \setbox\tw@\hbox{\m@th$#1%
       \widehat{%
          \vrule\@width\z@\@height\ht\z@
          \vrule\@height\z@\@width\wd\z@}$}%
    \dp\tw@-\ht\z@
    \@tempdima\ht\z@ \advance\@tempdima2\ht\tw@ \divide\@tempdima\thr@@
    \setbox\tw@\hbox{%
       \raise\@tempdima\hbox{\scalebox{1}[-1]{\lower\@tempdima\box
\tw@}}}%
    {\ooalign{\box\tw@ \cr \box\z@}}}
\newcommand\reallywidehat[1]{%
\savestack{\tmpbox}{\stretchto{%
  \scaleto{%
    \scalerel*[\widthof{\ensuremath{#1}}]{\kern.1pt\mathchar"0362\kern.1pt}%
    {\rule{0ex}{\textheight}}
  }{\textheight}%
}{2.4ex}}%
\stackon[-6.9pt]{#1}{\tmpbox}%
}
\definecolor{lightblue}{HTML}{2B77A4}
\colorlet{plotblue}{LightSkyBlue3!80}
\definecolor{darkred}{HTML}{9E0D0D}
\newcommand\numberthis{\addtocounter{equation}{1}\tag{\theequation}}
\title[Fourier restriction for the additive Brownian sheet]{Fourier restriction for the additive Brownian sheet}
\author{Jonathan M. Fraser}
\thanks{JMF was financially supported by an EPSRC Open Fellowship (EP/Z533440/1) and a Leverhulme Trust Research Project Grant (RPG-2023-281)}
\author{Ana E. de Orellana}
\thanks{AEdO was financially supported by the University of St Andrews.}
\begin{document}

\thispagestyle{empty}
\begin{abstract}
The Fourier restriction  problem asks when it is meaningful to restrict the Fourier transform of a function to a given set. Many of the key examples are smooth co-dimension  1 manifolds, although there is increasing interest in fractal sets.  Here we propose a natural intermediary problem where one considers the fractal surface generated by the graph of the additive Brownian sheet in $\mathbb{R}^k$.  We obtain the first non-trivial estimates in this direction, giving both a sufficient condition on the range of $q\in[1,2]$ for the Fourier transform to be $L^{q}(\R^{k+1})\to L^2(G(W))$ bounded and a necessary condition for it to be $L^{q}(\R^{k+1})\to L^p(G(W))$ bounded.  The sufficient condition  is obtained via the Fourier spectrum, which is a family of dimensions that interpolate between the Fourier and Hausdorff dimensions. Our main technical result, which is of interest in its own right, gives a precise formula for the Fourier spectrum of the natural measure on the graph of the additive Brownian sheet, and we apply this result to the Fourier restriction problem. Our restriction estimate  is stronger than the estimate obtained from the well-known Stein--Tomas restriction theorem for all $k\geq3$. We obtain the necessary condition in two different ways, one via the Fourier spectrum and one via an appropriate  Knapp   example.   \\ \\
  \textit{Mathematics Subject Classification}: primary: 42B10, 60J65; secondary: 42B20, 28A75, 28A78, 28A80.
\\
\textit{Key words and phrases}:  restriction problem, Fourier restriction, Fourier transform, Fourier dimension, Fourier spectrum, Brownian motion, stochastic process, additive Brownian sheet.
\end{abstract}
\maketitle
\tableofcontents

\section{Introduction}

\subsection{Fourier restriction and fractal surfaces}


The  Fourier restriction problem asks when it is meaningful to restrict the Fourier transform of a function to a set of measure zero.  Particular instances of this problem have become some of the most famous and long-standing problems in harmonic analysis, going back to work by Stein (see \cite{Ste70, Tom75}), and have deep connections with  geometric measure theory and  PDEs. Some of the most important examples include smooth co-dimension 1 manifolds such as the sphere, cone and paraboloid. Although Fourier restriction is usually cast in the context of smooth manifolds, there is a significant amount of work for the case where the set is of fractal dimension. In the fractal setting, it was first studied in \cite{Moc00}, and saw continuous development in the subsequent years \cite{Mit02,BS11,Che14}.  One of the main goals of this article is to motivate the study of the restriction problem for a natural family of (topological) co-dimension 1 manifolds which are not smooth but rather are realised as the graph of Brownian motion or more generally the Brownian sheet. One may consider the graphs of Brownian motion or the Brownian sheet to be the canonical examples of a fractal surface. In some sense, these examples sit in-between the fractal and smooth settings and exhibit many interesting geometric and analytical features, making them good candidates for Fourier restriction and the related problem of deriving the Fourier spectrum.   Indeed, such graphs  are currently attracting a lot of attention in the context of Fourier dimension, see \cite{FS18,  LL25+, LT25+}.

We now describe the Fourier restriction problem and the (additive) Brownian sheet  in more detail. Given a finite Borel measure $\mu$ supported on a Lebesgue measure zero set in $\rd$, the Fourier restriction problem asks  for which values of $q'\in[1,2)$, $p'\in[1,\infty]$ the Fourier transform is $L^{q'}(\rd) \to L^{p'}(\mu)$ bounded. That is, when does the restriction estimate
\begin{equation}\label{eq:restriction}
  \| \widehat{f}\, \|_{L^{p'}(\mu)} \lesssim \| f \|_{{L^{q'}(\rd)}}
\end{equation}
hold for all $f\in L^{q'}(\rd)$. By $L^p$ duality, this is equivalent to asking for the Fourier extension estimate
\begin{equation}\label{eq:extension}
    \| \widehat{f\mu} \|_{L^{q}(\rd)} \lesssim \| f \|_{L^{p}(\mu)}
\end{equation}
to hold for all $f\in L^{p}(\mu)$. Where we use the notation $A\lesssim B$ to mean that there exists a uniform constant $c$ such that $A\leq cB$. We also write  $A\approx B$ if $A\lesssim B$ and $B\lesssim A$; and $p'$ and $q'$ for the H\"older conjugate of $p,q \in [1,\infty]$, defined by $\frac{1}{p} + \frac{1}{p'} = \frac{1}{q} + \frac{1}{q'}=1$.

As mentioned above, particular instances of this problem having become central questions in the field. For example, in the case of the sphere and paraboloid it is conjectured that \eqref{eq:restriction} holds for the surface measure if
\begin{equation}\label{eq:conjecturepq}
    \frac{d-1}{p'}\geq \frac{d+1}{q}\quad \text{and}\quad q>\frac{2d}{d-1}.
\end{equation}

The Fourier restriction conjecture lies at the centre of many deep problems in harmonic analysis, and is partly motivated by its applications to PDEs. For example, the extension operator on the paraboloid $\widehat{f\sigma_{\mathbb{P}}}$ is the solution to the free Schr\"odinger equation, which describes the evolution of the wave function of a free particle in quantum mechanics
\begin{equation}\label{eq:Schrodinger}
    \begin{cases}
      \frac{\partial u}{\partial t}(x,t) = 2\pi i\,\Delta u(x,t), &(x,t)\in \R^{d-1}\times \R;\\
      ~~~ u(x,0) = g(x), &x\in\R^{d-1}.
    \end{cases}
\end{equation}
Here  $\sigma_{\mathbb{P}}$ is the surface measure on the paraboloid and $f = \widecheck{g}$. If the Fourier extension estimate  \eqref{eq:extension} holds for the paraboloid and $\widecheck{g}\in L^p(\sigma_{\mathbb{P}})$ for some $p\in[1,\infty]$, then $u\in L^q(\rd)$ where $p,q\in[1,\infty]$ satisfy \eqref{eq:conjecturepq}.


 Brownian motion is one of the most important, widely studied, and well understood   stochastic processes. It is defined as the unique (random) map $W:[0,1]\to\R$ with independent increments, that is almost surely continuous, satisfying $W(0)=0$ almost surely, and such that for $t,s\in\R$ $t>s$, $W(t)-W(s)\sim N(0,t-s)$. Its fractal properties have been studied  since the work of Levy \cite{Lev53} and Taylor \cite{Tay53}, and its Fourier analytic properties since at least Kahane \cite{kahane1966a, kahane1966b}.
 
 One may generalise Brownian motion to a map $W:[0,1]^k\to\R^l$, which is often referred to as the Brownian sheet or the $(k,l)$-Brownian sheet. In this paper we focus on the case $l=1$ and we will work with a simpler version of the $(k,1)$-Brownian sheet known as the additive Brownian sheet. This reduction makes the process easier to study, while still resembling the standard $(k,1)$-Brownian sheet (see \cite{KY02} for a more detailed discussion on additive processes). We define the  \emph{$(k,1)$-additive Brownian sheet} as the map $W:[0,1]^k\to\R$ satisfying that for $t\in[0,1]^k$, $t=(t_{1},\ldots,t_{k})$,
\begin{equation*}
    W(t) = \sum_{i=1}^k W^i(t_{i}),
\end{equation*}
where $W^i:[0,1]\to\R$ are independent $1$-parameter Brownian motions. This process is usually considered as a good proxy for the non-additive case and many properties which hold in the additive case can be shown to hold also in the non-additive case.  

Our main object of study is the graph of the $(k,1)$-additive Brownian sheet defined as
\begin{equation*}
    G(W) = \{ (t,W(t)) : t\in[0,1]^k\},
\end{equation*}
and we will refer to $G(W)$  as  the \textit{additive Brownian surface} (or just \textit{Brownian surface} in the non-additive case). The additive Brownian surface is a simply connected manifold of topological co-dimension 1 living in $\mathbb{R}^{k+1}$.  It comes with a natural `surface measure' which is simply the lift of Lebesgue measure on $[0,1]^k$ to the graph, that is, we define the \emph{surface measure} $\mu$ on the additive Brownian sheet for Borel $E\subset\R^{k+1}$ by
\begin{equation*}
    \mu(E) = \mathcal{L}^k\{ t\in[0,1]^k : (t,W(t))\in E \},
\end{equation*}
where $\mathcal{L}^k$ is the $k$-dimensional Lebesgue measure.  Our main question is now the following.

\begin{ques}
For which values of $p,q$ does the extension estimate \eqref{eq:extension} hold almost surely for the surface measure on the Brownian sheet? 
\end{ques}

We make partial progress towards answering this question in this paper. One may also consider the following less precise questions.

\begin{ques}
For which values of $p,q$ does the extension estimate \eqref{eq:extension} hold almost surely for natural measures supported fractal manifolds generated by important stochastic processes, such as the (non-additive) $(k,l)$-Brownian sheet, analogous fractional Brownian sheets, Levy processes, the Gaussian free field, etc?   
\end{ques}
Although these questions are of independent interest, it might be interesting to further investigate the connection to (stochastic) PDEs. Following the same steps as for the Schr\"odinger equation \eqref{eq:Schrodinger} we can define the random operator $T$ to be the Fourier multiplier with symbol $W(\xi)$. That is, $T$ is the operator such that $\widehat{Tu}(\xi,t) = W(\xi)\widehat{u}(\xi,t)$, or $Tu(x,t) = (\widecheck{W}*u)(x,t)$ (where $*$ is the convolution operator in the $x$ variable). Then $u(x,t)=\widehat{f\mu}(x,t)$ is the solution to the equation
\begin{equation}\label{eq:randomPDE}
    \begin{cases}
      \frac{\partial u}{\partial t}(x,t) = 2\pi i\,Tu(x,t), &(x,t)\in [0,1]^{k}\times \R;\\
      ~~~u(x,0) = f(x), &x\in[0,1]^{k},
    \end{cases}
\end{equation}
with $f = \widecheck{g}$ and $\mu$ the surface measure on the additive Brownian surface. The Fourier restriction estimate we obtain in Theorem~\ref{thm:restrictionBM} below implies that whenever $\widehat{f}\in L^2(\mu)$, almost surely the solution of \eqref{eq:randomPDE} is $u\in L^q(\rd)$ for any $q>4$ if $k=1$, or $q>\frac{8k+2}{3k}$ if $k\geq2$.  It would be interesting to know if there are useful physical interpretations of these equations or if there are connections to existing PDEs in the literature, but we do not pursue this here.

\begin{figure}[H]
  \centering
  \begin{subfigure}{0.5\textwidth}
    \includegraphics[scale=0.4]{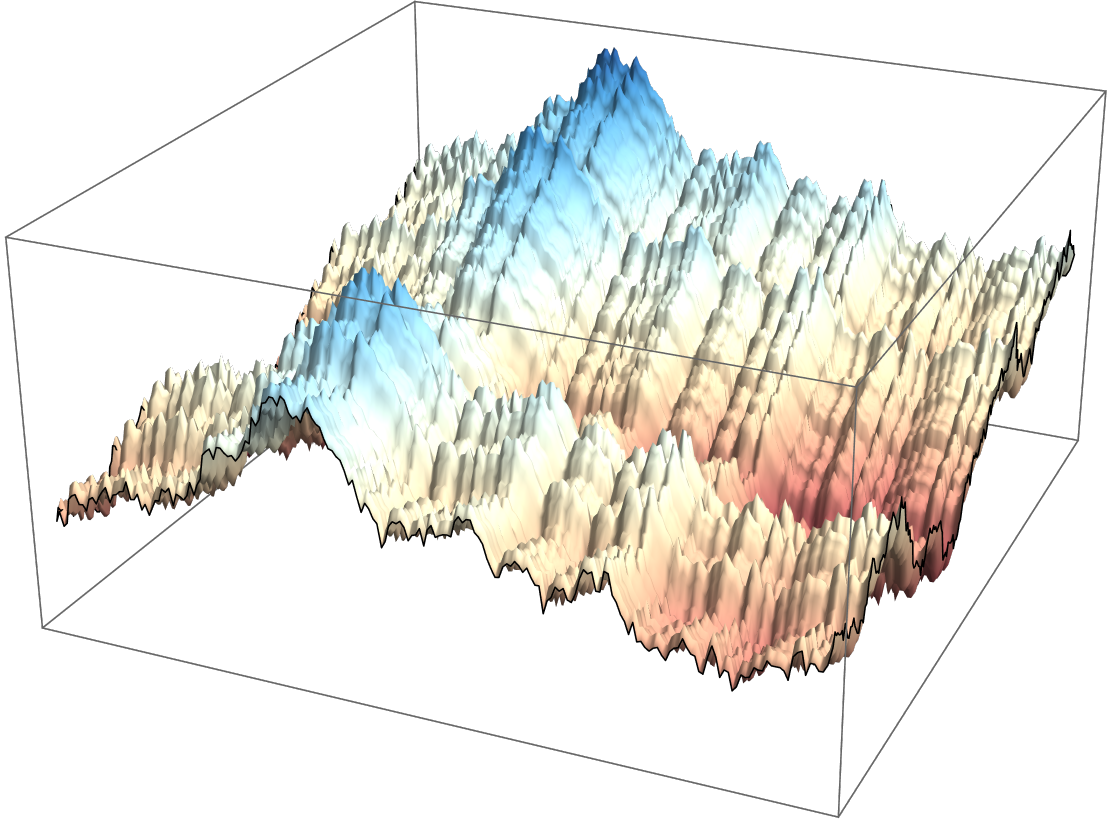}  
  \end{subfigure}%
  \begin{subfigure}{0.5\textwidth}
    \includegraphics[scale=0.4]{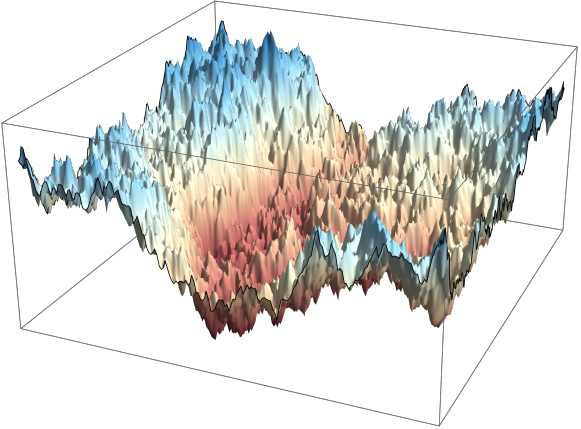}
  \end{subfigure}
  \caption{Two realisations of the Brownian surface for $k=2$. Left: in the additive case. Right: in the non-additive case.}
\end{figure}

\subsection{Dimension theory for Brownian surfaces and Stein--Tomas}

The dimension theory of the graph of standard Brownian motion was first studied in \cite{Tay53}, where it was proved that $\hd G(W) = \frac{3}{2}$ almost surely. This result was generalised by \cite{Yod75} to the (non-additive) Brownian sheet, showing that $\hd G(W)= k + 1/2$ and this formula also holds in the additive case.  The Fourier dimension of the graph of standard Brownian motion was shown to be equal to 1 in \cite{FOS14,FS18}.  The dimensional  results mentioned here also hold with the graph replaced by the surface measure, that is, the dimension of the surface is \emph{witnessed} by the surface measure.  We show below that this also holds for the Fourier dimension of the additive Brownian surface  when $k=2$ but for $k \geq 3$ one should not expect this to hold. Indeed, one might conjecture that the Fourier dimension of the surface is $k$ but we show that the Fourier dimension of the surface measure is $2$ for all $k \geq 2$. 

One reason why these Brownian graphs and surfaces are interesting examples in Fourier restriction theory is that they are \emph{not} Salem sets (they have distinct Hausdorff and Fourier dimensions).  This   was perhaps a surprise since Kahane \cite{Ka1} established that many other stochastically defined sets are Salem, including Brownian images. Indeed, it is often expected for random measures  to exhibit Fourier decay. A classic example of such behaviour can be seen in constructions of random Cantor measures \cite{SS17,SS18}. 

The Stein--Tomas theorem gives sufficient conditions for the extension  estimate \eqref{eq:extension} to hold in the case $p=2$ which depend on the Fourier and Frostman dimensions of the measure $\mu$. It states that the estimate
  \begin{equation}\label{eq:SteinTomas}
    \| \widehat{f\mu} \|_{L^{q}(\rd)} \lesssim \| f \|_{L^{2}(\mu)}
\end{equation}
holds for all $f\in L^2(\mu)$ as long as $q> 2 + 4 \frac{d-\frd\mu}{\fd\mu}$, where $\fd\mu$ is the Fourier dimension (see Section~\ref{sec:dimensions}), and $\frd\mu$ is the Frostman dimension
\begin{equation*}
    \frd\mu = \sup\{ \alpha\in[0,d]: \mu\big( B(x,r) \big)\lesssim r^\alpha,\,x\in\rd,\,r>0 \}.
\end{equation*}
As such, we can already get some information for the surface measure on the graph of standard Brownian motion given that the Fourier dimension is 1 and the Frostman  dimension is 3/2. We obtain that the estimate \eqref{eq:SteinTomas} holds for all $f\in L^{2}(\mu)$ whenever $q>4$.

\subsection{The Fourier spectrum and improved restriction estimates}\label{sec:dimensions}

The connection between Fourier decay and the potential theoretic method for   Hausdorff dimension gives  rise to the  Fourier spectrum, originally defined  in \cite{Fra24}.  This is a   family of dimensions that continuously interpolate between the the Fourier and Hausdorff dimensions for sets and the Fourier and Sobolev dimensions for measures. We now  recall the key definitions, referring the reader to \cite{Fra24, mattilaFourier} for more background. Recall first that the Fourier dimension of a Borel measure $\mu$ is defined by
\begin{equation*}
  \fd \mu = \sup\Big\{ s>0:   \sup_{\xi\in\rd}\big| \widehat{\mu}(\xi) \big|^2|\xi|^{s}<\infty \Big\}
\end{equation*}
where $\widehat{\mu}$ denotes the Fourier transform of $\mu$.  In particular, the Fourier dimension quantifies uniform decay of the Fourier transform. The Fourier dimension of a Borel set  $X$ is then
\begin{equation*}
  \fd X = \sup\Big\{ s\in[0,d] : \exists \mu\text{ on }X : \sup_{\xi\in\rd}\big| \widehat{\mu}(\xi) \big|^2|\xi|^{s}<\infty \Big\}.
\end{equation*}
The Sobolev dimension of $\mu$ is defined by
\begin{equation*}
    \sd \mu  = \sup\bigg\{ s>0 : \int_{\rd} \big| \widehat{\mu}(\xi) \big|^{2}|\xi|^{s - d} \,d\xi < \infty \bigg\}.
\end{equation*}
That is, the Sobolev dimension quantifies average Fourier decay. The Hausdorff dimension of $X$ may also be expressed in terms of average Fourier decay as follows. Indeed, 
\begin{equation*}
    \hd X = \sup\bigg\{ s\in[0,d] :\exists \mu\text{ on }X: \int_{\rd} \big| \widehat{\mu}(\xi) \big|^{2}|\xi|^{s - d} \,d\xi < \infty \bigg\}.
\end{equation*}
It is easy to see that $\fd X \leq \hd X$ and that $\fd \mu \leq \sd \mu$. The Fourier spectrum is a family of dimension that continuously interpolate between these isolated notions of dimension. Define the $(s,\theta)$-energies for $\theta\in(0,1]$ as
\begin{equation*}
    \J_{s,\theta} (\mu) = \bigg( \int_{\rd} \big| \widehat{\mu}(\xi) \big|^{\frac{2}{\theta}} |\xi|^{\frac{s}{\theta}-d} \,d\xi \bigg)^\theta,
\end{equation*}
and for $\theta = 0$ as
\begin{equation*}
    \J_{s,0}(\mu) = \sup_{\xi\in\rd}\big| \widehat{\mu}(\xi) \big|^2|\xi|^{s}.
\end{equation*}
Then the Fourier spectrum of $\mu$ at $\theta\in[0,1]$ is given by
\begin{equation*}
    \fs \mu = \sup\{ s>0 : \J_{s,\theta}(\mu)<\infty \},
\end{equation*}
and the Fourier spectrum of $X$ at $\theta\in[0,1]$ is
\begin{equation*}
    \fs X = \sup\{ s\in[0,d] :\exists \mu\text{ on }X: \J_{s,\theta}(\mu) < \infty \}.
\end{equation*}
The Fourier spectrum satisfies   $\fd X = \fd^0 X \leq \fs X\leq \fd^1 X = \hd X$, and   is non-decreasing and continuous for $\theta\in[0,1]$.  For measures, the Fourier spectrum is concave, continuous on $(0,1]$ and satisfies  $\fd \mu = \fd^0 \mu \leq \fs \mu \leq \fd^1 \mu =  \sd \mu$.  Moreover, for compactly supported measures it is continuous on the closed interval $[0,1]$.

Despite its recent inception, the Fourier spectrum has already seen several applications  in both Fractal Geometry and Harmonic Analysis; see, for example, \cite{Fra24} for applications to the distance set problem,  \cite{FdO24+} for applications to exceptional set estimates for orthogonal projections, and  \cite{LL25+, LT25+} where it was used to obtain the Fourier dimension of the graph of fractional Brownian motion.  Of particular importance to us will be its application to the Fourier restriction problem, as proved in \cite{CFdO24b+}.  Here we were able to strengthen the Stein--Tomas theorem by leveraging the additional information contained in the Fourier spectrum compared with the Fourier and Sobolev dimensions considered in isolation. This generalised Stein--Tomas type theorem states that \eqref{eq:SteinTomas} holds for all $f\in L^2(\mu)$ as long as
\begin{equation*}
    q>2 + 2 \frac{(d - \frd\mu)(2-\theta)}{\fs\mu - \theta\frd\mu}
\end{equation*}
for any $\theta\in[0,1]$ such that $\fs\mu>d\theta$.  This result recovers Stein--Tomas at $\theta=0$ but by optimising over allowable $\theta$ one can often obtain stronger  estimates.  With this in mind, the strategy in this paper is to derive a precise almost sure formula for the Fourier spectrum of the surface measure on the additive Brownian surface (see Theorem \ref{thm:FSBM}) and then to use this information to derive non-trivial Fourier restriction estimates (see Theorem \ref{thm:restrictionBM}). We also consider necessary conditions for Fourier restriction.  First we obtain them directly from the Fourier spectrum (see Corollary~\ref{usingus}) and second by constructing a Knapp example (see Theorem~\ref{thm:knapp} and Theorem~\ref{generalknapp}) which also works for non-additive Brownian and fractional Brownian surfaces. Curiously we obtain the same sufficient condition from both approaches.

\section{Main Results}

Our first main result gives a precise almost sure formula for the Fourier spectrum of the surface measure on the additive Brownian surface.  The $\theta=0$ case gives the Fourier dimension and this result is new for $k \geq 2$.  For $k=1$ it was proved in \cite{FS18}.

\begin{thm}\label{thm:FSBM}
  Let $\mu$ be the surface measure on the additive Brownian surface.   Then,  almost surely, for all $\theta\in[0,1]$,
  \begin{equation*}
      \fs \mu  = \min \big\{ k + \tfrac{\theta}{2}, 2 + k\theta \big\}.
  \end{equation*}
  In particular, there is a phase transition at $\theta = \frac{k-2}{k-1/2}$, whenever $k>2$, and for $k=1,2$ it is affine.
\end{thm}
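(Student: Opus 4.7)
\bigskip
\noindent\textbf{Proof plan.} The crucial structural observation is that the additivity of $W$ forces the Fourier transform of $\mu$ to \emph{factorise}. Indeed, writing $t=(t_1,\dots,t_k)$,
\begin{equation*}
    \widehat{\mu}(\xi,\eta) \;=\; \int_{[0,1]^k} e^{-2\pi i (\xi\cdot t + \eta W(t))}\,dt \;=\; \prod_{i=1}^k \widehat{\mu_i}(\xi_i,\eta),
\end{equation*}
where $\mu_i$ is the natural surface measure on the graph of the $1$-parameter Brownian motion $W^i$ in $\R^2$. Since the $W^i$ are independent, the random variables $\widehat{\mu_i}(\xi_i,\eta)$ are independent for each fixed $(\xi,\eta)$, so moments multiply: $\mathbb{E}|\widehat{\mu}(\xi,\eta)|^{2/\theta} = \prod_i \mathbb{E}|\widehat{\mu_i}(\xi_i,\eta)|^{2/\theta}$. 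This is what reduces the $k$-parameter problem to controlling the $1$-parameter object $\widehat{\mu_i}$.

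\smallskip
\noindent\textbf{Lower bound.} Fix $\theta\in(0,1]$ and $s<\min\{k+\theta/2,2+k\theta\}$; the aim is to show $\J_{s,\theta}(\mu)<\infty$ almost surely. By Fubini,
\begin{equation*}
    \mathbb{E}\,\J_{s,\theta}(\mu)^{1/\theta} \;=\; \int_{\R^{k+1}} \prod_{i=1}^k \mathbb{E}|\widehat{\mu_i}(\xi_i,\eta)|^{2/\theta}\;|(\xi,\eta)|^{s/\theta-(k+1)}\,d\xi\,d\eta.
\end{equation*}
The second moment can be computed exactly via the Gaussian formula $\mathbb{E} e^{-2\pi i \eta(W(s)-W(t))}=e^{-2\pi^2\eta^2|s-t|}$, giving a sharp bound of the form $\mathbb{E}|\widehat{\mu_i}(\xi_i,\eta)|^2\lesssim (\eta^2+1)/(\xi_i^2+\eta^4+1)$; higher even moments admit analogous closed-form expressions coming from the variance of $\sum_j(W(s_j)-W(t_j))$, and general $2/\theta$-moments are then controlled by interpolation with $\|\widehat{\mu_i}\|_\infty\leq 1$. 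Substituting these bounds and performing dyadic decomposition in the two relevant scales $|\eta|$ and $|\xi_i|/\eta^2$ (which encodes the parabolic geometry of $\mathbb{E}|\widehat{\mu_i}|^{2}$) reduces the integral to a sum of power-weighted volumes. I expect the two competing regimes to produce exactly the two candidate thresholds: the regime $|\xi|\ll \eta^2$ gives the bound $s<k+\theta/2$ (reflecting the Hausdorff/Sobolev dimension $k+1/2$), while the regime near $\eta\to 0$ gives $s<2+k\theta$ (reflecting the sinc-decay of $\widehat{\mu}(\xi,0)=\prod_i\operatorname{sinc}(\pi\xi_i)$). Finiteness of the expectation forces $\J_{s,\theta}(\mu)<\infty$ a.s.\ for each such $s$, and taking a countable dense sequence together with the monotonicity of $\theta\mapsto \fs\mu$ upgrades this to the full range of $(s,\theta)$ almost surely. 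The case $\theta=0$ is handled separately by transferring the uniform Fourier-decay bound of \cite{FS18} through the factorisation.

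\smallskip
\noindent\textbf{Upper bound.} This splits into two directional estimates.

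For $\fs\mu\leq k+\theta/2$: at $\theta=1$ this is the Sobolev dimension, which equals $\hd G(W)=k+1/2$ (Taylor–Yoder) because $\mu$ witnesses the Hausdorff dimension. For intermediate $\theta$, I would exploit the direction $\xi=0,\ \eta\to\infty$: the factorisation gives $|\widehat{\mu}(0,\eta)|=\prod_i|\widehat{\nu_i}(\eta)|$ where $\nu_i$ is the $1$-D Brownian occupation measure, each factor carrying decay of order $|\eta|^{-1/2}$ on typical scales. Feeding this into the $\J_{s,\theta}$ integral restricted to a tube around the $\eta$-axis forces divergence whenever $s>k+\theta/2$.

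For $\fs\mu\leq 2+k\theta$: use the explicit identity $\widehat{\mu}(\xi,0)=\prod_i\operatorname{sinc}(\pi\xi_i)$, whose envelope decays only like $1/|\xi_j|$ in each coordinate direction. Combined with the pointwise Lipschitz estimate $|\widehat{\mu}(\xi,\eta)-\widehat{\mu}(\xi,0)|\leq 2\pi|\eta|\int|W(t)|\,dt$ (a.s.\ finite via the integrability of Brownian motion), this gives an a.s.\ lower bound for $|\widehat{\mu}(\xi,\eta)|^{2/\theta}$ on a neighbourhood of the hyperplane $\{\eta=0\}$ whose thickness is calibrated by $|\widehat{\mu}(\xi,0)|$. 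Integrating this against the weight $|(\xi,\eta)|^{s/\theta-(k+1)}$ over dyadic shells in the coordinate direction $\xi=(N,0,\dots,0)$ and summing produces divergence exactly at $s=2+k\theta$. As a parallel route, one can also appeal to the Knapp example developed later in the paper (Theorem~\ref{thm:knapp}) via the general Fourier-restriction lower bound on $\fs$.

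\smallskip
\noindent\textbf{Main obstacle.} The delicate point is the moment analysis in the lower bound: crude use of the $\theta=1$ second-moment bound together with Jensen's inequality gives a threshold of the form $2+(k+1)\theta$, which is off by a factor of $\theta$ from the claimed $2+k\theta$. Recovering the sharp constant requires exploiting the genuine $p$-th moment decay of $\widehat{\mu_i}$ (not just the second moment lifted to $2/\theta$) via the explicit Gaussian computation, together with care in how the dyadic shells in $|\xi|$ and $|\eta|^2$ interact.
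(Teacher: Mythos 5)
Your starting point --- the factorisation $\widehat{\mu}(\xi,\eta)=\prod_i\widehat{\mu_i}(\xi_i,\eta)$ coming from additivity and independence --- is exactly the paper's, but both halves of your argument have gaps at the decisive step. For the lower bound you propose computing $\mathbb{E}\,\J_{s,\theta}(\mu)^{1/\theta}$ by Fubini and sharp bounds on $\mathbb{E}|\widehat{\mu_i}(\xi_i,\eta)|^{2/\theta}$. This is a genuinely different route from the paper, which never takes expectations of the energy: it imports the \emph{almost sure pointwise} regime-by-regime bounds from \cite{FS18} (namely $|\widehat{\mu_j}|\lesssim|y||\xi_j|^{-1}$, $|\xi_j|^{-1}$, or $|y|^{-1}\sqrt{\log|y|}$ according to the relative positions of $1$, $|y|$, $|y|^2$, $|\xi_j|$) and then integrates deterministically over the finitely many boundary orderings of the coordinates, a reduction justified by linearity in the ordering indices. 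Your route could in principle work, but the step you need --- that $\mathbb{E}|\widehat{\mu_i}|^{p}$ for non-integer $p=2/\theta$ decays like the $p$-th power of the typical size in every frequency regime --- is precisely what you flag as the ``main obstacle'' and then do not supply; interpolating the second moment against $\|\widehat{\mu_i}\|_\infty\le1$ only returns the second moment and loses the theorem. So the lower bound is a plan whose hardest ingredient is acknowledged but absent.

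The upper bound has a more concrete error. You locate the obstruction to $\fs\mu\le k+\theta/2$ in a tube around the $\eta$-axis, asserting each factor $\widehat{\nu_i}(\eta)$ decays like $|\eta|^{-1/2}$ there. That $|\eta|^{-1/2}$ is the worst-case (Salem) rate for the occupation measure; its typical size is $|\eta|^{-1}$ (indeed $\mathbb{E}|\widehat{\nu_i}(\eta)|^2\approx|\eta|^{-2}$, matching the bound $|y|^{-1}\sqrt{\log|y|}$ used in the paper's Cases 1--2), and even granting $|\eta|^{-1/2}$ per factor along a sparse sequence, a unit tube around the $\eta$-axis only forces divergence at thresholds like $k(1+\theta)$, not $k+\theta/2$. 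The true critical region is the parabolic boundary $\xi_i\approx\eta^{2}$ with all $\xi_i$ comparable (the paper's Case 4), and the main technical novelty of the paper is exactly how to exploit it for $\theta<1$ without lower bounds on $|\widehat{\mu}|$: one shows the divergent $\theta=1$ energy (forced by $\sd\mu\le\hd G(W)=k+1/2$) is concentrated on a thin region $\xi_i>\eta^{2-\varepsilon_1}$ of finite $|\xi|^{-(k+1/(2-\varepsilon_2))}$-volume, and transfers the divergence to $\J_{s',\theta}$ via Jensen's inequality against a probability measure on that region. Nothing in your sketch replaces this. Finally, your direct argument for $\fs\mu\le 2+k\theta$ via the Lipschitz estimate $|\widehat{\mu}(\xi,\eta)-\widehat{\mu}(\xi,0)|\le2\pi|\eta|\int|W|$ only guarantees $|\widehat{\mu}|\gtrsim|\xi_1|^{-1}$ on a slab of thickness $|\eta|\lesssim|\xi_1|^{-1}$, and integrating over that slab yields the weaker bound $2+(k+1)\theta$; the paper gets the sharp constant by noting the projection of $\mu$ onto the first $k$ coordinates is $\mathcal{L}^k$ and invoking $\fs\mu\le\fs\mathcal{L}^k+\theta=2+k\theta$.
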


We will prove Theorem \ref{thm:FSBM} in section \ref{mainproof}. The previous theorem gives a lower bound for the Fourier spectrum of the Brownian surface itself which is sharp at $\theta=1$ and is sharp at $\theta=0$ for $k=1,2$.  For $k \geq 3$ the lower bound is not sharp and all that can be said is  the general upper bound
\[
\fs G(W) \leq \min\{ k+ (k+1)\theta, k+1/2\} 
\]
which follows by combining (the higher dimensional version of) the main result from \cite{FOS14} that any graph over $[0,1]^k$ has Fourier dimension at most $k$ (see \cite[Theorem 6.10]{mattilaFourier}), the fact that $\hd G(W) = k+1/2$, and the general upper bound for the Fourier spectrum from \cite{CFdO24a+}. To obtain the sharp result one would need to consider a different measure, most likely the lift of a Schwartz density to the graph.  We record the $\theta=0$ and $k=2$ case in the following corollary since it may be interesting in its own right.

\begin{cor}
The Fourier dimension of the additive Brownian surface $G(W)$ in the case $k=2$ is $\fd G(W) = 2$.  
\end{cor}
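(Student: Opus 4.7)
The plan is to combine the almost sure formula from Theorem~\ref{thm:FSBM} (evaluated at $\theta=0$ and $k=2$) with a general upper bound for the Fourier dimension of graphs.

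First, I would apply Theorem~\ref{thm:FSBM} with $k=2$ and $\theta = 0$. The formula gives
\[
\fd \mu \ = \ \fs[0] \mu \ = \ \min\{2 + 0, \, 2 + 0\} \ = \ 2
\]
almost surely, where $\mu$ is the surface measure on $G(W)$. Since $\mu$ is a measure supported on $G(W)$, the definition of the Fourier dimension of a set immediately yields the lower bound $\fd G(W) \geq \fd \mu = 2$ almost surely.

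For the matching upper bound, I would invoke the higher dimensional analogue of the result of Fraser--Orponen--Sahlsten \cite{FOS14} (as presented in \cite[Theorem 6.10]{mattilaFourier}), which states that any graph of a function defined on $[0,1]^k$ has Fourier dimension at most $k$. Applying this with $k=2$ and the fact that $G(W) \subset \R^3$ is (by construction) the graph of the random function $t \mapsto W(t)$ over $[0,1]^2$ yields $\fd G(W) \leq 2$ deterministically (for every realisation with the requisite measurability). Combining the two bounds gives $\fd G(W) = 2$ almost surely.

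There is essentially no obstacle here beyond correctly citing the graph bound and confirming that Theorem~\ref{thm:FSBM} at $\theta = 0$ genuinely produces the value $2$ when $k=2$. The substantive content is entirely contained in Theorem~\ref{thm:FSBM}; this corollary is a direct specialisation together with a matching deterministic upper bound for graphs.
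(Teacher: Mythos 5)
Your argument is correct and is exactly the one the paper intends: the lower bound $\fd G(W)\geq 2$ comes from Theorem~\ref{thm:FSBM} at $\theta=0$, $k=2$ (where $\min\{k+\theta/2,\,2+k\theta\}=2$) applied to the surface measure, and the matching upper bound is the graph bound of \cite{FOS14} (see \cite[Theorem 6.10]{mattilaFourier}), which the paper itself invokes in the paragraph preceding the corollary. No gaps.
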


\begin{figure}[H]
  \begin{tikzpicture}[scale = 0.8]
      \begin{axis}[
          axis lines = left,
          xmin = 0,
          xmax = 1.05,
          ymin= 0,
          ymax = 7.2,
          ytick = {1,2,3,4,5,6,7},
          yticklabels = {$1$,$2$,$3$,$4$,$5$,$6$,$7$},
          xlabel=$\theta$,
          ylabel style={rotate=-90},
        ylabel=$\fs \mu$,
          xtick = {0.2,0.4,0.6,0.8,1},
          xticklabels = {$0.2$,$0.4$,$0.6$,$0.8$,$1$},
      ]
      \foreach \k in {1,2,3,4,5,6} {
        \addplot [
          domain=0:1, 
          thick,
          samples=100, 
      ]
      {min(2 + \k*x, \k + x/2)};
      }
      \end{axis}
  \end{tikzpicture}
  \caption{Plots of the Fourier spectrum of the surface measure $\mu$ on the additive Brownian surface $G(W)$  for $k=1,\ldots,6$; see Theorem~\ref{thm:FSBM}.}
\end{figure}
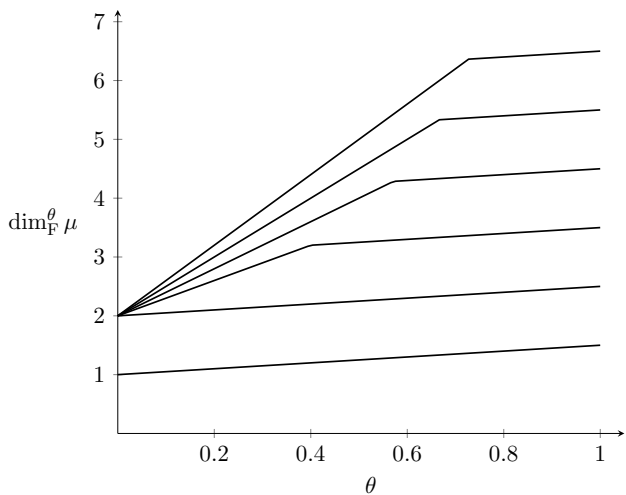

We also note the following result regarding the Frostman dimension of the surface measure on the additive Brownian surface. This result will  be needed to obtain a Fourier restriction estimate \eqref{eq:extensionST} for the additive Brownian surface measure from the Stein--Tomas theorem and its generalisation in \cite{CFdO24b+}.
\begin{prop}[Xiao]\label{cor:frostman}
  Let $\mu$ be the surface measure on the additive Brownian surface. Then, almost surely
  \begin{equation*}
      \frd\mu = k + \frac{1}{2}.
  \end{equation*}
\end{prop}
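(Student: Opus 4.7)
The plan is to establish matching upper and lower bounds on $\frd \mu$.  For the upper bound, $\frd \mu \leq \hd G(W) = k + 1/2$ is an immediate consequence of the mass distribution principle combined with the classical formula for the Hausdorff dimension of the additive Brownian surface (Yoder): any uniform ball estimate $\mu(B(x,r)) \lesssim r^\alpha$ forces $\mathcal{H}^\alpha(G(W)) > 0$, hence $\alpha \leq k + 1/2$.

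For the lower bound, since a polylogarithmic factor is harmless for the Frostman dimension, it suffices to show that almost surely
\begin{equation*}
  \mu\big( B((t_0, y_0), r) \big) \lesssim r^{k + 1/2} \sqrt{\log(1/r)}
\end{equation*}
uniformly over $(t_0, y_0) \in \R^{k+1}$ and small $r > 0$.  Enclosing the ball in a cube and using the definition of the surface measure gives
\begin{equation*}
  \mu\big( B((t_0, y_0), r) \big) \leq \mathcal{L}^k \bigg\{ t \in \prod_{i=1}^k [t_{0,i} - r, t_{0,i} + r] : |W(t) - y_0| \leq r \bigg\},
\end{equation*}
and I would then exploit the additive structure $W(t) = \sum_{i=1}^k W^i(t_i)$ by integrating in $t_1$ with $(t_2, \ldots, t_k)$ held fixed.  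Writing $c = c(t_2,\ldots,t_k,y_0) = y_0 - \sum_{i \geq 2} W^i(t_i)$, the inner slice becomes, via the occupation density formula,
\begin{equation*}
  \mathcal{L}^1\big\{ t_1 \in [t_{0,1} - r, t_{0,1} + r] : |W^1(t_1) - c| \leq r \big\} = \int_{c-r}^{c+r} \big( L^1_{t_{0,1} + r}(y) - L^1_{t_{0,1} - r}(y) \big) \, dy,
\end{equation*}
where $L^1$ denotes the local time of $W^1$.

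The main probabilistic input is the classical uniform modulus of continuity of Brownian local time (Kesten, Barlow--Yor), which yields, almost surely for small $h > 0$,
\begin{equation*}
  \sup_{y \in \R}\, \sup_{0 \leq s \leq 1 - h} \big( L^1_{s+h}(y) - L^1_s(y) \big) \lesssim \sqrt{h \log(1/h)}.
\end{equation*}
Applied with $h = 2r$, this bounds the $t_1$-slice above by $r^{3/2} \sqrt{\log(1/r)}$ uniformly in $t_{0,1}$ and $c$; integrating the remaining $k-1$ coordinates over intervals of length $2r$ then produces the desired bound.  The hard part of this plan is the uniform local-time estimate, which is the substantive probabilistic ingredient; Xiao's work on local times of Brownian and additive sheets furnishes the cleanest reference and is presumably why the result is attributed to him, while the remainder is a routine Fubini-style computation exploiting the additive structure of $W$.
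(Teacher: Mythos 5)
Your proof is correct, and both halves take a genuinely different route from the paper. For the lower bound $\frd\mu\geq k+\tfrac12$ the paper simply cites Xiao \cite[Proposition~3.2 and (3.11)]{Xia97} (noting the argument survives the passage to the additive sheet), which gives, for each fixed $t$, an almost sure upper density bound $\mu(B((t,W(t)),r))\lesssim r^{k+1/2}(\log\log 1/r)^{1/(2k)}$; you instead reprove the estimate from scratch by slicing the ball along $t_1$, invoking the occupation density formula, and applying the uniform modulus of continuity of Brownian local time $\sup_y\sup_s(L_{s+h}(y)-L_s(y))\lesssim\sqrt{h\log(1/h)}$ (Perkins/Barlow--Yor). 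Your version has a real advantage: it is uniform over \emph{all} centres $(t_0,y_0)$ and all small $r$ simultaneously, which is exactly what the definition of $\frd$ demands, whereas the statement quoted from Xiao is pointwise in $t$ with the exceptional null set a priori depending on $t$ (and only covers centres on the graph); the price is that you must import the uniform local-time modulus, a nontrivial but classical result. For the upper bound the paper uses $\frd\mu\leq\sd\mu$ together with Theorem~\ref{thm:FSBM} at $\theta=1$, while you use the mass distribution principle and $\hd G(W)=k+\tfrac12$; both are valid, and yours has the minor virtue of being independent of the paper's main theorem and of Fourier-analytic machinery altogether. One small point of care in your Fubini step: the constant $c=c(t_2,\ldots,t_k,y_0)$ varies with the outer variables, but since the local-time estimate is uniform in the spatial variable this causes no difficulty, as you correctly note.
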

\begin{proof}
  First note that \cite[Propostion~3.2]{Xia97} still holds for the additive Brownian surface. Thus, by \cite[(3.11)]{Xia97} with $\alpha = 1/2$, there exists $C>0$ such that for any $t\in [0,1]^k$, almost surely
  \begin{equation*}
    \limsup_{r\to0} \frac{\mu\big( B((t,W(t)),r) \big)}{r^{k+\frac{1}{2}}(\log\log 1/r)^{\frac{1}{2k}}} \leq C,
  \end{equation*}
  which gives $\frd\mu\geq k + \frac{1}{2}$. This together with the fact that $\frd\mu\leq\sd\mu$  and Theorem~\ref{thm:FSBM} proves the result.
\end{proof}

Next we combine Theorem \ref{thm:FSBM} with \cite[Theorem~3.1]{CFdO24b+} to obtain the following Fourier restriction estimate for the surface measure on the additive Brownian surface. 

\begin{thm}\label{thm:restrictionBM}
  Let $\mu$ be  the surface measure on the additive Brownian surface. If $q>4$ for $k=1$, or 
  \begin{equation*}
      q>\frac{8k+2}{3k} 
  \end{equation*}
  for $k\geq2$, then almost surely, for all $f\in L^2(\mu)$,
  \begin{equation}\label{eq:extensionST}
      \| \widehat{f\mu} \|_{L^{q}(\rd)} \lesssim \| f \|_{L^2(\mu)}.
  \end{equation}
  Equivalently, if $1<q<4/3$ for $k=1$, or
  \begin{equation*}
      1\leq q'< \frac{8k+2}{5k+2},
  \end{equation*}
  for $k\geq2$, then, almost surely, for all $f\in L^{q'}(\rd)$,
  \begin{equation}\label{eq:restrictionST}
      \| \widehat{f} \|_{L^{2}(\mu)} \lesssim \| f \|_{L^{q'}(\rd)}.
  \end{equation}
\end{thm}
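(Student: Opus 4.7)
The plan is to combine Theorem~\ref{thm:FSBM} and Proposition~\ref{cor:frostman} with the Fourier-spectrum-enhanced Stein--Tomas theorem recorded in Section~\ref{sec:dimensions} (from \cite[Theorem~3.1]{CFdO24b+}): the extension estimate \eqref{eq:extensionST} holds whenever some $\theta\in[0,1]$ with $\fs\mu>(k+1)\theta$ satisfies
\[
q\;>\;2+2\,\frac{(k+1-\frd\mu)(2-\theta)}{\fs\mu-\theta\,\frd\mu}.
\]
Here the ambient dimension is $d=k+1$, and the inputs are $\frd\mu=k+\tfrac{1}{2}$ (hence $d-\frd\mu=\tfrac{1}{2}$) and $\fs\mu=\min\{k+\theta/2,\,2+k\theta\}$, with phase transition at $\theta^\ast=\frac{k-2}{k-1/2}$ when $k\geq 2$.

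The heart of the argument is the resulting one-variable optimization over $\theta\in[0,1]$. On the regime $\theta\in[0,\theta^\ast]$ where $\fs\mu=2+k\theta$, the denominator collapses to $\fs\mu-\theta\frd\mu=2-\theta/2$, and the bound becomes $q>2+\frac{2(2-\theta)}{4-\theta}$, which is strictly decreasing in $\theta$. On the regime $\theta\in[\theta^\ast,1]$ where $\fs\mu=k+\theta/2$, the denominator becomes $\fs\mu-\theta\frd\mu=k(1-\theta)$, yielding $q>2+\frac{2-\theta}{k(1-\theta)}$, which is strictly increasing in $\theta$. Therefore, whenever $\theta^\ast\in[0,1]$—that is, for $k\geq 2$—the optimum is attained at the phase transition $\theta=\theta^\ast$, and a direct substitution in either formula yields the common value $\frac{2k+2}{3k}$, so $q>2+\frac{2k+2}{3k}=\frac{8k+2}{3k}$. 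For $k=1$ we have $\theta^\ast<0$, only the second regime is relevant, and the minimum over $[0,1]$ is at $\theta=0$, giving $q>4$. One also checks that the admissibility constraint $\fs\mu>(k+1)\theta$ is non-binding on the relevant intervals (in the first regime it reduces to $\theta<2$, and in the second to $\theta<\frac{2k}{2k+1}$, which contains $\theta^\ast$), so the optimization above is genuine.

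Finally, the restriction form \eqref{eq:restrictionST} follows from the extension form \eqref{eq:extensionST} by the standard duality $(L^q)^\ast=L^{q'}$ and $(L^2(\mu))^\ast=L^2(\mu)$, noting that $\frac{8k+2}{5k+2}$ is the H\"older conjugate of $\frac{8k+2}{3k}$ and $4/3$ is the conjugate of $4$. There is no substantial obstacle here: once Theorem~\ref{thm:FSBM} and Proposition~\ref{cor:frostman} are in hand, the proof is a short routine optimization. The only care needed is to track the phase transition so that the two regimes glue continuously at $\theta^\ast$, which they do by continuity of $\fs\mu$, and to verify the admissibility condition at the optimum—both routine.
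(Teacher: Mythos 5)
Your proposal is correct and follows essentially the same route as the paper: apply \cite[Theorem~3.1]{CFdO24b+} with the inputs $\frd\mu=k+\tfrac12$ and $\fs\mu=\min\{k+\theta/2,\,2+k\theta\}$, then minimise over $\theta$, finding the optimum at the phase transition $\theta^\ast=\frac{k-2}{k-1/2}$ for $k>2$ and at $\theta=0$ for $k=1,2$. Your write-up is in fact more detailed than the paper's (which leaves the monotonicity analysis and the admissibility check $\fs\mu>(k+1)\theta$ implicit), and all of your computations, including the value $\frac{8k+2}{3k}$ at $\theta^\ast$ and the duality exponents, check out.
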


\begin{proof}
From  \cite[Theorem~3.1]{CFdO24b+} and Proposition~\ref{cor:frostman} we get that  for all $f\in L^2(\mu)$, \eqref{eq:extensionST} holds whenever
\[
q> 2+\frac{2-\theta}{\fs \mu - (k+1/2)\theta}
\]
for all $\theta \in [0,1]$.  We now optimise this estimate by varying $\theta$ and find that  the optimal estimate is  obtained (uniquely) at the phase transition $\theta = \frac{k-2}{k-1/2}$ for $k>2$ and (uniquely) at $\theta =0$ for $k=1,2$.
\end{proof}

Note that for $k=1,2$, the Fourier extension estimate we obtain is the same as that coming from the  Stein--Tomas theorem, that is,  $q>4,3$ respectively.  However, we obtain a better estimate than Stein--Tomas for all $k\geq3$.

Next we consider the reverse problem of finding necessary conditions for Fourier restriction.  First, we obtain the following by combining Theorem \ref{thm:FSBM} with \cite[Theorem~3.6]{CFdO24b+}.

\begin{cor} \label{usingus}
 Let $\mu$ be  the surface measure on the additive Brownian surface. Then, almost surely,  \eqref{eq:extension} cannot hold whenever
  \begin{equation*}
      q< 2 + \frac{1}{k}.
  \end{equation*}
  Equivalently, \eqref{eq:restrictionST} cannot hold whenever
  \begin{equation*}
      q'> \frac{2k+1}{k+1}.
  \end{equation*}
\end{cor}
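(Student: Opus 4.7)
The plan is to combine the general Fourier-spectrum based necessary condition \cite[Theorem~3.6]{CFdO24b+} with the explicit formula for $\fs\mu$ from Theorem~\ref{thm:FSBM}, and then to choose the parameter $\theta\in[0,1]$ that yields the strongest obstruction.  In its cleanest form, \cite[Theorem~3.6]{CFdO24b+} is the Fourier-spectrum refinement of the classical observation obtained by testing the extension inequality \eqref{eq:extension} against the constant function $f\equiv 1$ (which forces $\widehat{\mu}\in L^{q}(\R^d)$); recast in the $(s,\theta)$-energy language, it says that if \eqref{eq:extension} holds for a finite Borel measure $\nu$ on $\R^d$ then $\fs\nu \geq d\theta$ at the distinguished value $\theta = 2/q$, equivalently $q \geq 2d/\fs\nu$.

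Specialising to $\mu$ on the full-measure event supplied by Theorem~\ref{thm:FSBM}, so that $\fs\mu = \min\{k+\theta/2,\; 2+k\theta\}$ for every $\theta\in[0,1]$ simultaneously and $d=k+1$, the choice $\theta = 2/q$ reduces the necessary inequality $\fs\mu \geq (k+1)\theta$ to
\[
\min\bigl\{ k + \tfrac{1}{q},\; 2 + \tfrac{2k}{q} \bigr\} \;\geq\; \tfrac{2(k+1)}{q}.
\]
The right-hand branch returns only the vacuous $q\geq 1$, so the binding condition is $k + 1/q \geq 2(k+1)/q$, that is $kq \geq 2k+1$, i.e.\ $q\geq 2 + 1/k$.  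Contrapositively, whenever $q<2+1/k$ the estimate \eqref{eq:extension} must fail almost surely, which is the stated conclusion.  The equivalent formulation for \eqref{eq:restrictionST} then follows at once from H\"older duality, since $q=(2k+1)/k$ corresponds to $q'=(2k+1)/(k+1)$.

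No substantive obstacle is expected.  The two bookkeeping points to verify are that \cite[Theorem~3.6]{CFdO24b+} is invoked on the single full-measure event on which Theorem~\ref{thm:FSBM} provides the formula for all $\theta$ simultaneously (so that the conclusion is genuinely almost sure rather than almost sure for each fixed $\theta$), and that the chosen parameter $\theta = 2/q$ lies in $[0,1]$ throughout the regime $q\geq 2+1/k \geq 2$; both are immediate.  The optimisation itself is an elementary piecewise-linear computation because the Fourier spectrum in Theorem~\ref{thm:FSBM} is piecewise affine in $\theta$.
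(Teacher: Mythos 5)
Your argument is correct and is essentially the paper's proof: both combine the necessary condition of \cite[Theorem~3.6]{CFdO24b+} with the formula for $\fs\mu$ from Theorem~\ref{thm:FSBM} and optimise over $\theta$, your choice $\theta=2/q$ being just the contrapositive reformulation of the paper's $q<\sup\{2/\theta:\fs\mu<(k+1)\theta\}$ (equivalent by continuity and monotonicity of the Fourier spectrum). The piecewise-linear computation, the identification of the binding branch $k+\theta/2$, and the duality step all match the intended argument.
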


\begin{proof}
From \cite[Theorem~3.6]{CFdO24b+}, we get that \eqref{eq:extension} cannot hold whenever 
\[
q< \sup\{2/\theta : \fs \mu < (k+1)\theta\},
\]
and combining this with Theorem \ref{thm:FSBM} yields the desired estimate.
\end{proof}

Next we derive a general necessary condition for \eqref{eq:extension} to hold by building a Knapp example.  In fact,  all we use to do this is that the additive Brownian sheet  is almost surely locally $\alpha$-H\"older for any $\alpha<1/2$.  This follows easily from the well-known result in the case $k=1$. Thus, more generally we have the following result which also holds for the non-additive Brownian sheet (with $\alpha<1/2$) and the analogous fractional Brownian surface with $\alpha< H$ where $H \in (0,1)$ is the Hurst parameter, as well as many other stochastically defined surfaces. 
\begin{thm} \label{generalknapp}
  Let $f:[0,1]^k \to\R$ be such that for some $x\in[0,1]^k$, $f$ is locally $\alpha$-H\"older   for some $\alpha>0$ at $x$. Let $\mu$ be the lift of Lebesgue measure onto the graph of $f$. Then \eqref{eq:extension} cannot hold whenever
  \begin{equation*}
      p< \frac{kq}{k(q-1)-\alpha}.
  \end{equation*}
\end{thm}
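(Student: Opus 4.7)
The plan is to construct a Knapp-type example at the H\"older point $x$. Locally near $x$ the graph of the H\"older function lives inside a very flat box of dimensions $\delta \times \cdots \times \delta \times \delta^{\alpha}$, and the extension of the characteristic function of this piece concentrates on the dual box, which has volume roughly $\delta^{-k-\alpha}$. Balancing the $L^q$ norm of this extension against its $L^p(\mu)$ norm gives the claimed obstruction.

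In detail, let $g : [0,1]^k \to \R$ denote the H\"older function (to avoid clashing with the letter $f$ used in the extension estimate). By the local $\alpha$-H\"older hypothesis at $x$, for all sufficiently small $\delta>0$ the axis-parallel cube $Q_\delta \subset [0,1]^k$ of side $\delta$ centred at $x$ satisfies $|g(t) - g(x)| \le C \delta^{\alpha}$ for all $t \in Q_\delta$, for some constant $C$ independent of $\delta$. Let $E_\delta = \{(t, g(t)) : t \in Q_\delta\}$ be the corresponding piece of the graph, and take as test function the indicator $h_\delta = \mathbf{1}_{E_\delta} \in L^p(\mu)$. Since $\mu$ is the lift of Lebesgue measure, $\|h_\delta\|_{L^p(\mu)} = \mu(E_\delta)^{1/p} = \delta^{k/p}$.

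Next I would compute a pointwise lower bound for $|\widehat{h_\delta \mu}|$ on a suitable dual region. Writing $\xi = (\xi', \xi_{k+1}) \in \R^k \times \R$ and using that the pushforward under $t \mapsto (t, g(t))$ realises $h_\delta \mu$ as an integral over $Q_\delta$, we have
\begin{equation*}
\widehat{h_\delta \mu}(\xi) = \int_{Q_\delta} e^{-2\pi i(\xi' \cdot t + \xi_{k+1} g(t))}\, dt.
\end{equation*}
Restrict attention to the dual box $R_\delta = \{\xi : |\xi'| \le c \delta^{-1}, \ |\xi_{k+1}| \le c \delta^{-\alpha}\}$ where $c$ is a small constant (depending on $C$ and $k$). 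For $\xi \in R_\delta$, both $\xi' \cdot (t - x)$ and $\xi_{k+1}(g(t) - g(x))$ are bounded in absolute value by, say, $1/10$ for all $t \in Q_\delta$, so the real part of the exponential is uniformly bounded below by $1/2$. Hence $|\widehat{h_\delta\mu}(\xi)| \gtrsim \delta^{k}$ on $R_\delta$, and since $|R_\delta| \approx \delta^{-k-\alpha}$,
\begin{equation*}
\|\widehat{h_\delta \mu}\|_{L^q(\R^{k+1})} \gtrsim \delta^{k} \cdot |R_\delta|^{1/q} \approx \delta^{k - (k+\alpha)/q}.
\end{equation*}

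If \eqref{eq:extension} held with uniform constant $C_0$, then $\delta^{k - (k+\alpha)/q} \lesssim \delta^{k/p}$ for all small $\delta>0$, which forces $k - (k+\alpha)/q \ge k/p$, equivalently $p \ge \frac{kq}{k(q-1)-\alpha}$. The contrapositive is precisely the stated claim. The only non-routine step is verifying the phase-boundedness that yields $|\widehat{h_\delta\mu}(\xi)| \gtrsim \delta^k$ on the elongated dual box $R_\delta$; this is where the H\"older exponent enters and controls the anisotropy, but once the two smallness conditions $|\xi'|\delta \lesssim 1$ and $|\xi_{k+1}|\delta^\alpha \lesssim 1$ are imposed the argument is standard.
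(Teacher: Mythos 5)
Your proposal is correct and is essentially the paper's own argument: the same Knapp-type construction with the indicator of the flat $\delta\times\cdots\times\delta\times\delta^{\alpha}$ graph piece, the same phase-boundedness on the dual box of volume $\approx\delta^{-k-\alpha}$, and the same balancing of $\delta^{k-(k+\alpha)/q}$ against $\delta^{k/p}$. No gaps.
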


We prove Theorem \ref{generalknapp} in Section  \ref{knappproof}. Applying this to the surface measure on the additive Brownian surface we get the following.

\begin{cor}\label{thm:knapp}
  Let $\mu$ be the surface measure on the additive Brownian surface. Then, almost surely, \eqref{eq:extension} cannot hold whenever
  \begin{equation}\label{eq:knapprange}
      p< \frac{2kq}{2k(q-1)-1}.
  \end{equation}
  Equivalently, \eqref{eq:restriction} cannot hold whenever
  \begin{equation*}
      p'> \frac{2kq}{2k+1}.
  \end{equation*}
\end{cor}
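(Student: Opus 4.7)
The plan is to reduce this corollary to Theorem~\ref{generalknapp} by identifying the almost sure H\"older exponent of the additive Brownian sheet. The key input is the classical fact (going back to L\'evy) that one-dimensional Brownian motion is almost surely locally $\alpha$-H\"older for every $\alpha<1/2$. This transfers to $W$ immediately via the additive structure: for $s,t\in[0,1]^k$,
\begin{equation*}
    |W(t)-W(s)| \;\le\; \sum_{i=1}^{k} |W^{i}(t_i)-W^{i}(s_i)| \;\lesssim\; \sum_{i=1}^{k} |t_i-s_i|^{\alpha} \;\lesssim\; |t-s|^{\alpha},
\end{equation*}
so $W$ is almost surely locally $\alpha$-H\"older at every $t\in[0,1]^k$ for every $\alpha<1/2$. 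Since a countable intersection of almost sure events is still almost sure, I would fix a sequence $\alpha_n\uparrow 1/2$ and discard a single null set off which $W$ is $\alpha_n$-H\"older at, say, some fixed base point for every $n$.

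On that almost sure event, Theorem~\ref{generalknapp} applied to $f=W$ with exponent $\alpha_n$ yields that \eqref{eq:extension} fails whenever
\begin{equation*}
    p < \frac{kq}{k(q-1)-\alpha_n}.
\end{equation*}
Letting $n\to\infty$ and taking the supremum over $n$, the forbidden range of $p$ exhausts
\begin{equation*}
    p < \sup_{\alpha<1/2} \frac{kq}{k(q-1)-\alpha} \;=\; \frac{kq}{k(q-1)-\tfrac12} \;=\; \frac{2kq}{2k(q-1)-1},
\end{equation*}
which is exactly \eqref{eq:knapprange}.

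For the equivalent formulation, I would simply dualise using $\frac{1}{p}+\frac{1}{p'}=1$: the inequality $p<\frac{2kq}{2k(q-1)-1}$ rearranges to $\frac{1}{p'}<\frac{2k+1}{2kq}$, i.e.\ $p'>\frac{2kq}{2k+1}$, giving the stated necessary condition for \eqref{eq:restriction}. There is no real obstacle here once Theorem~\ref{generalknapp} is in hand; the only point of care is the measurability/almost sure bookkeeping above, and the simple optimisation of the H\"older exponent $\alpha\nearrow 1/2$.
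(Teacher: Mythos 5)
Your proposal is correct and takes essentially the same route as the paper: the corollary is deduced by applying Theorem~\ref{generalknapp} to $f=W$, using that the additive Brownian sheet inherits almost sure local $\alpha$-H\"older continuity for every $\alpha<1/2$ from the one-dimensional case, and then letting $\alpha\nearrow 1/2$. Your additional bookkeeping (countable intersection of almost sure events and the explicit dualisation $p<\tfrac{2kq}{2k(q-1)-1}\iff p'>\tfrac{2kq}{2k+1}$) simply fills in details the paper leaves implicit.
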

Setting $p=2$ in the previous theorem (as in the Stein--Tomas estimate) gives the same estimate as in Corollary \ref{usingus}.  The exponent coming from   \cite[Theorem~3.6]{CFdO24b+} is the conjectured sharp Fourier restriction endpoint for many manifolds and this might hint that this is the right threshold here also.

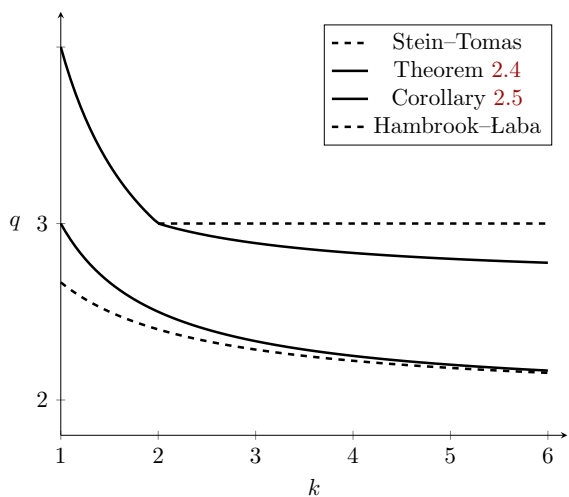
\begin{figure}[H]
    \begin{tikzpicture}[scale=0.8]
      \begin{axis}[
        axis lines = left,
        xmin = 1,
        xmax = 6.2,
        ymin= 1.8,
        ymax = 4.2,
        ytick = {2, 3,4,5,6},
        yticklabels = {$2$, $3$},
        xlabel=$k$,
        ylabel style={rotate=-90},
        ylabel=$q$,
        xtick = {1, 2, 3, 4, 5, 6, 7, 8, 9, 10},
        xticklabels = {$1$,$2$,$3$,$4$,$5$,$6$,$7$,$8$,$9$,$10$},
        legend pos=north east
    ]
   
    \addplot [ 
        domain=2:6, 
        thick,
        samples=100, 
        very thick,
        dashed
    ]
    {3};

    \addplot [ 
    domain=1:6, 
    very thick,
    samples=100,
    ]
    {max(2 + 4*(x+1-x-1/2)/x,(8*x + 2)/(3*x))};

    \addplot [ 
    domain=1:6, 
    very thick,
    samples=100, 
    ]
    {(2*x + 1)/x};

    \addplot[ 
        domain=1:6,
        very thick,
        samples=100,
        dashed,
    ]
    {(4*(x+1))/(2*x +1)};
    \legend{Stein--Tomas, Theorem~\ref{thm:restrictionBM}, Corollary~\ref{usingus},Hambrook--{\L}aba}

    \end{axis}
    \end{tikzpicture}
    \caption{Bounds for the range of $q$ for the Fourier extension estimate to hold and not hold for the Brownian sheet; see Theorem~\ref{thm:restrictionBM} and Corollary~\ref{usingus}. By ``Hambrook--{\L}aba'' we refer to the observation made in \cite{HL13} that no extension estimate will hold for $2\leq q < \frac{2d}{\sd \mu} = \frac{2(k+1)}{k + 1/2}$. These plots should be   understood as only applying to integer points in the domain, but we included the
full curves for aesthetic reasons.  In particular, using the Fourier spectrum bounds the sharp threshold between the solid curves and appealing to previous estimates bounds the sharp threshold between the dashed curves.}\label{fig:restrictionBM}
  \end{figure}

\section{Proof of Theorem~\ref{thm:FSBM}} \label{mainproof}

\subsection{Decomposition of the Fourier transform and almost sure decay estimates}

Let $W:[0,1]^k\to\R$ be the additive Brownian sheet,
\begin{equation*}
    W(t) = \sum_{i=1}^k W^i(t_{i}),
\end{equation*}
where $W^i:[0,1]\to\R$ are independent $1$-parameter Brownian motions. Let $\mu$ be the associated surface measure, that is, the push-forward of Lebesgue measure on $[0,1]^k$ to $W$ defined for $E\subset\R^{k+1}$ by
\begin{equation*}
    \mu(E) = \mathcal{L}\{ t\in[0,1]^k : (t,W(t))\in E \}.
\end{equation*}
Given $\xi\in\R^{k+1}$ with $\xi = (\xi_{1},\ldots, \xi_{k},y)$, the Fourier transform decomposes as
\begin{align*}
    \widehat{\mu}(\xi) &= \int_{[0,1]^k} e^{-2\pi i(\xi_{1}t_{1}+\cdots+\xi_{k}t_{k} + yW(t))} \,dt\\
    &= \int_{0}^1 e^{-2\pi i(\xi_{1}t_{1} + yW^1(t_{1}))} \,dt_{1}\cdots\int_{0}^1 e^{-2\pi i (\xi_{k}t_{k} + yW^k(t_{k}))}\, dt_{k}\\
    &= \widehat{\mu_{1}}(\xi_{1},y)\cdots\widehat{\mu_{k}}(\xi_{k},y),
\end{align*}
where for each $j=1,\ldots, k$, $\mu_{j}$ is the surface measure on the graph of $(1,1)$-Brownian motion. That is, each $\mu_{j}$ is the push-forward of Lebesgue measure on $[0,1]$ to the graph of the Brownian motion $W^i$.  Fortunately, we already have good almost sure estimates for the Fourier transforms of these measures from \cite{FS18} and we recall these estimates below.

In \cite{FS18} the authors estimate $\widehat{\mu_{j}}(\xi_{j},y)$ by splitting the frequency space into regions depending on the angle that $(\xi_{j},y)$ makes with the origin. We will now use those estimates, in a slightly more explicit form. For the `horizontal angles' (that is, the case when $|\xi_j|\geq |y|^2$), \cite[proof of Lemma~3.4]{FS18}, and some random $T>0$,
\begin{equation*}
    \big|\widehat{\mu_{j}}(\xi_{j},y)\big|\leq \bigg| \int_{T}^1 e^{-2\pi i (\xi_{j}t_{j} + yW(t_{j}))} \,dt_{j}\bigg| \lesssim |\xi_{j}|^{-1},
\end{equation*}
almost surely.  Also, \cite[proof of Lemma~3.6 and Lemma~3.5]{FS18} gives for the same    $T>0$,
\begin{equation*}
    \big|\widehat{\mu_{j}}(\xi_{j},y)\big|\leq \bigg| \int_{0}^T e^{-2\pi i (\xi_{j}t_{j} + yW(t_{j}))} \,dt_{j}\bigg| \lesssim |y||\xi_{j}|^{-1},
\end{equation*}
almost surely. This estimate comes from It\^o calculus. Finally, the third estimate comes from the `vertical angles' (that is, the case when   $|\xi_j| \leq  |y|^2$).  In this case,  \cite[proof of Lemma~3.10 and Lemma~3.7]{FS18} gives that almost surely
\begin{equation*}
    \big| \widehat{\mu_{j}}(\xi_{j},y) \big| \lesssim |y|^{-1} \sqrt{\log |y|}.
\end{equation*}
This estimate   uses ideas from Kahane's work on Brownian images. Therefore, putting these estimates together we obtain that almost surely
\begin{align*}
   \big| \widehat{\mu_{j}}(\xi_{j},y) \big| &\lesssim \min\Big\{ \max \big\{  |y||\xi_{j}|^{-1}, |\xi_{j}|^{-1} \big\}, |y|^{-1}\sqrt{\log |y|}, 1 \Big\}\\
    &=\begin{cases}
      |y||\xi_{j}|^{-1}\,, &\text{if }1\leq|y|\leq|y|^2\leq|\xi_{j}|;\\
      |\xi_{j}|^{-1}\,, &\text{if }|y|\leq1\leq|\xi_{j}|;\\
      |y|^{-1} \sqrt{\log |y|} \,, &\text{if } \max\{|\xi_{j}|,1\}\leq|y|^{2};\\
      1\,,&\text{if } \max\{|\xi_{j}|,|y|\} \leq1.
    \end{cases}\numberthis\label{eq:minmax}
\end{align*}
The implicit constants in the above are random, but that is sufficient for what follows.  The almost sure bounds in \eqref{eq:minmax} are sufficient to prove the almost sure uniform bound
\[
 \big| \widehat{\mu_{j}}(\xi) \big| \lesssim |\xi|^{-1/2} \sqrt{\log |\xi|} \qquad (\xi \in \mathbb{R}^2)
\]
which gave the main result of \cite{FS18} that the Fourier dimension of the graph of Brownian motion and the surface measure on this graph both have Fourier dimension 1.  However, one can see from \eqref{eq:minmax}  that this `worst case' decay rate happens relatively rarely and that a much better decay
\[
 \big| \widehat{\mu_{j}}(\xi) \big| \lesssim |\xi|^{-1 }   \qquad (\xi \in \mathbb{R}^2)
\]
happens most of the time.  The Fourier spectrum leverages this and quantifies how good the average Fourier decay is.

\subsection{Proof of the lower bound} 

Fix $\theta \in (0,1]$. We may assume that $k \geq 2$ since the case $k=1$ is dealt with by the result of \cite{FS18} and the fact that the Fourier spectrum of a measure is concave (and so the affine lower bound  comes for free).    We need to prove that the energies $ \J_{s,\theta}(\mu)$ are finite for appropriate values of $s$.  To achieve this, we split the energy integral into 6 regions defined below.  By symmetry, we may restrict to the case when all the coordinates of $\xi$ are positive and assume that $0<\xi_{k}<\xi_{k-1}<\cdots<\xi_{2}<\xi_{1}$.  Moreover, for convenience,  we assume without loss of generality that   $y$ and $y^2$ are distinct from all the $\xi_j$ and from 1.  The alternative occurs on a measure zero set and therefore can be omitted form the integral.  Further, we assume that at least one of the coordinates of $\xi=(\xi_1, \dots, \xi_k, y)$ is strictly greater than $1$, which can be done since the integral is always finite when restricted to a neighbourhood of the origin. This means we assume $\max\{\xi_1, y\} >1$.  Therefore, in the third case of \eqref{eq:minmax} (i.e. when $\max\{|\xi_{j}|,1\}<y^{2}$) it could be that $1<y<\xi_{j}<y^2$, $1< \xi_{j}<y<y^2$, or $\xi_{j}<1<y<y^2$.

With this, we only need to work out the different cases for each possible position of $1$, $y$, and $y^2$ relative to the list of $\xi_j$. That is, for $j_{1},j_{2},j_{3}\in\{ 0,\ldots,k+1 \}$ we consider the regions given by
\begin{equation*}
    0<\xi_{k}<\cdots<\xi_{j_{3}} <1<\xi_{j_{3}-1} < \cdots < \xi_{j_{2}} < y < \xi_{j_{2}-1} < \cdots< \xi_{j_{1}} < y^2 <\xi_{j_{1}-1}  < \cdots<\xi_{1},
\end{equation*}
and
\begin{equation*}
    0<\xi_{k}<\cdots<\xi_{j_{3}} <y^2<\xi_{j_{3}-1} < \cdots < \xi_{j_{2}} < y < \xi_{j_{2}-1} < \cdots< \xi_{j_{1}} < 1 <\xi_{j_{1}-1}  < \cdots<\xi_{1}.
\end{equation*}
Two cases are needed here to account for the two possible orderings $1<y<y^2$ and $y^2<y<1$.   We are left with a lot of cases to consider, each taking the form of an energy integral restricted to a certain region.  For each integral, we need to determine conditions on $s$ which ensure finiteness.  Then the lower bound for the Fourier spectrum will be the minimum of all of these bounds, that is, we require all of the integrals to be finite simultaneously.    Fortunately, we can dramatically reduce the number of cases via a linearity argument.  Indeed, it is straightforward to see (and will become apparent below) that the bounds obtained for the Fourier spectrum in each case  are  \emph{linear} in the variables $j_{1},j_{2}$, and $j_{3}$. Therefore, since linear functions on   an interval achieve their extrema on the boundary,  the dominant  bounds will be obtained in  the following boundary cases:

For $y>1$:
\begin{enumerate}[leftmargin=*,label=\textbf{Case \arabic*.}]
  \item $j_{1} = 0,$ $j_{2} = 0$, $j_{3} = 1$:
  \begin{equation*}
       0<\xi_{k}<\cdots <\xi_{1}<1<y<y^2.
  \end{equation*}
  \item $j_{1} = 0$, $j_{2} = 1$, $j_{3} = k+1$:
  \begin{equation*}
       1<\xi_{k}<\cdots <\xi_{1}<y<y^2.
  \end{equation*}
  \item $j_{1} = 1$, $j_{2} = k+1$, $j_{3} = k+1$:
  \begin{equation*}
       1<y<\xi_{k}<\cdots <\xi_{1}<y^2.
  \end{equation*}
  \item $j_{1} = k+1$, $j_{2} = k+1$, $j_{3} = k+1$:
  \begin{equation*}
      1<y<y^2<\xi_{k}<\cdots <\xi_{1}.
  \end{equation*}
\end{enumerate}
For the cases where $y<1$ recall that we   assume that $\xi_{1}>1$. Therefore, we consider only:
\begin{enumerate}[leftmargin=*,label=\textbf{Case \arabic*.}]
\addtocounter{enumi}{4}
  \item $j_{1} = 2,$ $j_{2} = 2$, $j_{3} = 2$:
  \begin{equation*}
       0<\xi_{k}<\cdots <\xi_{2}<y^2<y<1<\xi_{1}.
  \end{equation*}
  \item $j_{1} = k+1$, $j_{2} = k+1$, $j_{3} = k+1$:
  \begin{equation*}
      y^2<y<1<\xi_{k}<\cdots <\xi_{1}.
  \end{equation*}
\end{enumerate}

 We begin with the lower bound. We will split the $(s,\theta)$-energy as follows
 \begin{equation*}
    \J_{s,\theta}(\mu)^{\frac{1}{\theta}} \approx  \int_{|\xi|>1} \big| \widehat{\mu}(\xi) \big|^{\frac{2}{\theta}} |\xi|^{\frac{s}{\theta}-k-1} \,d\xi \approx \sum_{\ell=1}^6 \int_{\xi\text{ in Case }\ell} \big| \widehat{\mu}(\xi) \big|^2 |\xi|^{\frac{s}{\theta}-k-1} \,d\xi \eqqcolon \sum_{\ell=1}^6 J_{s,\theta}^\ell(\mu),
\end{equation*}
 and prove that for $s<\min \big\{ k + \tfrac{\theta}{2}, 2 + k\theta \big\}$, all of the integrals  $J_{s,\theta}^1(\mu)\ldots J_{s,\theta}^6(\mu)$ are finite, where  $J_{s,\theta}^\ell(\mu)$ refers to the energy integral restricted to the region defined above in Case $\ell$.

\subsubsection*{Case 1: $0<\xi_{k}<\cdots <\xi_{1}<1<y<y^2$}

We will use that for $j=1,\ldots,k$, $\big|\widehat{\mu_{j}}(\xi_{j},y)\big|\lesssim y^{-1}\sqrt{\log y}$ almost surely, and $|\xi|\approx y$. Then, almost surely,
\begin{align*}
    J_{s,\theta}^{1}(\mu) 
    &\lesssim \int_{y=1}^\infty y^{\frac{s}{\theta}-k-1} \int_{\xi_{1}=0}^1 y^{-\frac{2}{\theta}}(\log y)^{\frac{1}{\theta}} \int_{\xi_{2}=0}^{\xi_{1}} y^{-\frac{2}{\theta}}(\log y)^{\frac{1}{\theta}}  \cdots \int_{\xi_{k}=0}^{\xi_{k-1}} y^{-\frac{2}{\theta}}(\log y)^{\frac{1}{\theta}} \,d\xi_{k}\cdots d\xi_{2}\,d\xi_{1}\,dy \\
    &\lesssim \int_{y=1}^\infty y^{\frac{s-2k}{\theta}-k-1} (\log y)^{\frac{k}{\theta}} \,dy<\infty,
\end{align*}
provided $s < 2k+k\theta$.

\subsubsection*{Case 2: $1<\xi_{k}<\cdots<\xi_{1}<y<y^2$}

For $j=1,\ldots,k$, $\big|\widehat{\mu_{j}}(\xi_{j},y)\big|\lesssim y^{-1}\sqrt{\log y}$ almost surely, and $|\xi| \approx y$. Thus, almost surely, 
\begin{align*}
     J_{s,\theta}^{2}(\mu) &\lesssim\int_{y=1}^\infty y^{\frac{s}{\theta}-k-1} \int_{\xi_{1}=1}^{y} y^{-\frac{2}{\theta}} (\log y)^{\frac{1}{\theta}} \int_{\xi_{2}=1}^{\xi_{1}} y^{-\frac{2}{\theta}} (\log y)^{\frac{1}{\theta}} \cdots \int_{\xi_{k}=1}^{\xi_{k-1}} y^{-\frac{2}{\theta}}(\log y)^{\frac{1}{\theta}}  \,d\xi_{k}\cdots d\xi_{2}\,d\xi_{1}\,dy \\
     &\approx \int_{y=1}^\infty y^{\frac{s-2k}{\theta}-1}(\log y)^{\frac{k}{\theta}}  \,dy< \infty,
\end{align*}
provided $s<2k$.

\subsubsection*{Case 3: $1<y<\xi_{k}<\cdots <\xi_{1}<y^2$} In this case, for $j=1,\ldots,k$, $\big|\widehat{\mu_{j}}(\xi_{j},y)\big|\lesssim y^{-1}\sqrt{\log y} $ almost surely, and $|\xi|\approx \xi_{1}$. We may safely assume that $s>\theta$ since $J_{\theta,\theta}(\mu)<\infty$ by concavity of the Fourier spectrum for measures.  Then, almost surely, 
\begin{align*}
  J_{s,\theta}^3(\mu) &\lesssim  \int_{y=1}^{\infty} y^{-\frac{2k}{\theta}} (\log y)^{\frac{k}{\theta}}  \int_{\xi_{1}=y}^{y^2} \xi_{1}^{\frac{s}{\theta}-k-1}\int_{\xi_{2}=y}^{\xi_{1}} \cdots \int_{\xi_{k}=y}^{\xi_{k-1}}\,d\xi_{k}\cdots\,d\xi_{2}\,d\xi_{1} \,dy\\
  &\approx   \int_{y=1}^{\infty} y^{-\frac{2k}{\theta}} (\log y)^{\frac{k}{\theta}}  \int_{\xi_{1}=y}^{y^2} \xi_{1}^{\frac{s}{\theta}-2} \,d\xi_{1} \,dy\\
    &\lesssim   \int_{y=1}^{\infty} y^{\frac{2s-2k}{\theta}-2} (\log y)^{\frac{k}{\theta}}  \,dy<\infty, 
\end{align*}
provided $s<k+\frac{\theta}{2}$.

\subsubsection*{Case 4: $1<y<y^2 < \xi_{k}< \cdots<\xi_{1}$}

Now for $j=1,\ldots,k$, $\big|\widehat{\mu_{j}}(\xi_{j},y)\big|\lesssim y\xi_{j}^{-1}$ almost surely, and $|\xi|\approx \xi_{1}$. Then, almost surely, 
\begin{align*}
    J_{s,\theta}^{4}(\mu) &\lesssim \int_{\xi_{1}=1}^{\infty} \xi_{1}^{\frac{s-2}{\theta}-k-1} \int_{\xi_{2}=1}^{\xi_{1}} \xi_{2}^{-\frac{2}{\theta}} \cdots \int_{\xi_{k}=1}^{\xi_{k-1}} \xi_{k}^{-\frac{2}{\theta}} \int_{y=1}^{\xi_{k}^{1/2}} y^{\frac{2k}{\theta}} \,dy \,d\xi_{k}\cdots\,d\xi_{2}\,d\xi_{1} \\
    &\approx \int_{\xi_{1}=1}^{\infty} \xi_{1}^{\frac{s-2}{\theta}-k-1} \int_{\xi_{2}=1}^{\xi_{1}} \xi_{2}^{-\frac{2}{\theta}} \cdots\int_{\xi_{k}=1}^{\xi_{k-1}} \xi_{k}^{-\frac{2}{\theta} + \frac{k}{\theta} + \frac{1}{2}} \,d\xi_{k}\cdots\,d\xi_{2}\,d\xi_{1} \\
    &\approx \int_{\xi_{1}=1}^{\infty} \xi_{1}^{\frac{s-2}{\theta}-k-1 + \big( 1-\frac{2}{\theta} \big)(k-1) + \frac{k}{\theta} + \frac{1}{2}}\,d\xi_{1}\\
     &= \int_{\xi_{1}=1}^{\infty} \xi_{1}^{\frac{s-k}{\theta}  - \frac{3}{2}}\,d\xi_{1} < \infty, 
\end{align*}
provided $s<k+\frac{\theta}{2}$.

\subsubsection*{Case 5: $0<\xi_{k}<\cdots <\xi_{2}<y^2<y<1<\xi_{1}$}\label{case5}
In this case we will use the bounds $\big|\widehat{\mu_{j}}(\xi_{j},y)\big| \lesssim 1$ for $i=2,\ldots,k$ and $\big|\widehat{\mu_{1}}(\xi_{1},y)\big| \lesssim |\xi|^{-1}$ almost surely, and $|\xi|\approx\xi_{1}$. This gives, almost surely, 
\begin{equation*}
    J_{s,\theta}^{5}(\mu) \lesssim \int_{\xi_{1}=1}^{\infty} \xi_{1}^{\frac{s-2}{\theta}-k-1} \int_{y=0}^{1} \int_{\xi_{2}=0}^{y^2}\cdots\int_{\xi_{k}=0}^{\xi_{k-1}} d\xi_{k}\cdots d\xi_{2}\,dy\,d\xi_{1} = \int_{\xi_{1}=1}^\infty \xi_{1}^{\frac{s-2}{\theta}-k-1} < \infty,
\end{equation*}
provided $s<2 + k\theta$.

\subsubsection*{Case 6: $0<y^2<y<1<\xi_{k}<\cdots <\xi_{1}$}

For $j=1,\ldots,k$, $\big|\widehat{\mu_{j}}(\xi_{j},y)\big|\lesssim |\xi_j|^{-1}$ almost surely, and $|\xi|\approx \xi_{1}$. This yields, almost surely, 
\begin{align*}
  J_{s,\theta}^{6}(\mu) &\lesssim \int_{\xi_{1}=1}^{\infty}\xi_{1}^{\frac{s-2}{\theta} -k-1} \int_{\xi_{2}=1}^{\xi_{1}}\xi_{2}^{-\frac{2}{\theta}} \cdots \int_{\xi_{k}=1}^{\xi_{k-1}} \xi_{k}^{-\frac{2}{\theta}} \int_{y=0}^1  \,dy\,d\xi_{k}\cdots\,d\xi_{2}\,d\xi_{1} \\
  &\approx\int_{\xi_{1}=1}^{\infty}\xi_{1}^{\frac{s-2}{\theta} -k-1}\,d\xi_{1} <\infty,
\end{align*}
provided $s < 2+k\theta$.

The minimum bounds for $s$ are obtained in Cases 3, 4, 5 and 6. Therefore, almost surely, 
\begin{equation*}
    \fs \mu\geq\min \big\{ k + \tfrac{\theta}{2}, 2 + k\theta \big\},
\end{equation*}
which completes the proof of the lower bound.

\subsection{Proof of the upper bound} 

First note that the projection of $\mu$ onto the first $k$ coordinates is the $k$-dimensional Lebesgue measure $\mathcal{L}^{k}$. Therefore, by \cite[Proposition~4.2]{FdO24+} and \cite[Proposition~6.1]{Fra24+}
\begin{equation}\label{eq:UpperProj}
    \fs \mu \leq \fs \mathcal{L}^{k} + \theta  = \big(2 + (k-1)\theta\big) + \theta = 2 + k\theta,
\end{equation}
which proves the first upper bound.


We now turn to the second upper bound.  \emph{A priori} this should involve seeking lower bounds for the decay rate of the Fourier transform of $\mu$, at least along certain sequences.  This information is not provided in \cite{FS18} and could be technically challenging to obtain.  Fortunately, we found an alternative approach based on establishing that the infinitude of the usual energy ($\theta =1$) for values of $s>k+1/2$  comes from a region of $\rd$ which has relatively small volume.  We then transfer this information to the energies we use via Jensen's inequality.  We believe that this approach will be useful for other problems in the future. 

 Let $k + \frac{1}{2} < s < 2 + k$, in which case all the energy integrals considered in the proof of the lower bound are finite apart from possibly $J_{s,1}^3(\mu)$ and $J_{s,1}^4(\mu)$.   This can be seen by comparing $s$ with the bounds coming from each case in the proof above. Since $\sd \mu \leq \hd G(W) = k+1/2$ we know
 \[
 \J_{s,1}(\mu) = \infty.
 \]
 Therefore, 
\begin{equation*}
  \infty = \J_{s,1}(\mu)^{\frac{1}{\theta}} \approx (J_{s,1}^1(\mu) + \cdots + J_{s,1}^{6}(\mu))^{\frac{1}{\theta}} \approx (J_{s,1}^{3}(\mu) + J_{s,1}^{4}(\mu))^{\frac{1}{\theta}}.
\end{equation*}
We will now slightly modify the regions for Cases 3 and 4 to show that in fact, the relevant estimate comes from Case 4.

Let $0<\varepsilon_{1}<\frac{3}{k+1}$ and $k+\frac{1}{2}<s<\frac{2k-1}{2-\varepsilon_{1}}+1< 2 + k$. We change the threshold from $y^2$ to $y^{2-\varepsilon_1}$ and  consider the following two new cases:
\begin{enumerate}[leftmargin=*,label=\textbf{Case \arabic*.}]
  \item[\textbf{Case 3$^*$:}]$1<y<\xi_{k}<\cdots <\xi_{1}<y^{2-\varepsilon_{1}}$.
  \item[\textbf{Case 4$^*$:}]$1<y<y^{2-\varepsilon_{1}} < \xi_{k}< \cdots<\xi_{1}$.
\end{enumerate}
We will show that for the chosen values of $s$, $J_{s,1}^{3^*}(\mu)$ is finite. From \eqref{eq:minmax} we know that for $j=1,\ldots,k$, $\big| \widehat{\mu_{j}}(\xi_{j},y) \big|\lesssim y^{-1}\sqrt{\log y}$ almost surely, and $|\xi|\approx \xi_{1}$. Therefore,
\begin{align*}
  J_{s,1}^{3^*}(\mu) &\lesssim  \int_{y=1}^{\infty} y^{-2k} (\log y)^{k}  \int_{\xi_{1}=y}^{y^{2-\varepsilon_{1}}} \xi_{1}^{s-k-1}\int_{\xi_{2}=y}^{\xi_{1}} \cdots \int_{\xi_{k}=y}^{\xi_{k-1}}\,d\xi_{k}\cdots\,d\xi_{2}\,d\xi_{1} \,dy\\
  &\approx \int_{y=1}^{\infty} y^{-2k} (\log y)^{k}  \int_{\xi_{1}=y}^{y^{2-\varepsilon_{1}}} \xi_{1}^{s-2} \,d\xi_{1} \,dy\\
    &\lesssim   \int_{y=1}^{\infty} y^{-2k + (2-\varepsilon_{1})(s-1)} (\log y)^{k}  \,dy<\infty 
\end{align*}
since $\varepsilon_{1}>0$ and $s<\frac{2k-1}{2-\varepsilon_{1}}+1$.  Since $ \J_{s,1}(\mu)=\infty$, we deduce that   $J_{s,1}^{4^*}(\mu)=\infty$.   Note that there are intermediate cases between 3* and 4* such as
\[
1<y<\xi_{k}<\cdots <\xi_{2}<y^{2-\varepsilon_{1}}<\xi_{1}
\]
and it is necessary to establish finiteness of all of these cases too in order to justify that the infinitude of $ \J_{s,1}(\mu)$ comes only from case 4*.  However, we obtain finiteness in the intermediate cases by appealing to the linearity argument used in the proof of the lower bound (or by direct calculation).

Note that for any $\varepsilon_{2}>\varepsilon_{1}$,
\begin{align*}
    \int_{\xi\text{ in Case~4}^*} |\xi|^{- \big( k + \frac{1}{2-\varepsilon_{2}} \big)} \,d\xi &\approx \int_{\xi_{1}=1}^\infty\int_{\xi_{2}=1}^{\xi_{1}}\cdots\int_{\xi_{k}=1}^{\xi_{k-1}} \int_{y=1}^{\xi_{k}^{\frac{1}{2-\varepsilon_{1}}}} \xi_{1}^{-\big( k + \frac{1}{2-\varepsilon_{2}} \big)} \,dy\,d\xi_{k}\cdots\,d\xi_{2}\,d\xi_{1} \\
    &\approx \int_{\xi_{1}=1}^{\infty}\xi_{1}^{-1+\frac{1}{2-\varepsilon_{1}} - \frac{1}{2-\varepsilon_{2}}} \,d\xi_{1}<\infty.
\end{align*}
With this, let $\varepsilon_{2}>\varepsilon_{1}$ and let $c>0$ be such that the measure $dm(\xi) = c|\xi|^{-\big(k + \frac{1}{2-\varepsilon_{2}}\big)}\,d\xi$ is a probability measure on the region given by Case 4$^*$. Then, by Jensen's inequality,
\begin{align*}
  J_{s,1}^{4^*}(\mu)^{\frac{1}{\theta}} &= \Bigg( \int_{\xi\text{ in Case~4}^*} \big| \widehat{\mu}(\xi) \big|^2 |\xi|^{s - k-1} \,d\xi\Bigg)^{\frac{1}{\theta}}\\
  &\approx \Bigg( \int_{\xi\text{ in Case~4}^*} \big| \widehat{\mu}(\xi) \big|^2 |\xi|^{s -1 + \frac{1}{2-\varepsilon_{2}}} \,dm(\xi)\Bigg)^{\frac{1}{\theta}}\\
  &\leq \int_{\xi \text{ in Case 4}^*} \big| \widehat{\mu}(\xi) \big|^\frac{2}{\theta} |\xi|^{\frac{s -1 + 1/(2-\varepsilon_{2})}{\theta}}  \,dm(\xi)\\
   &\approx \int_{\xi \text{ in Case 4}^*} \big| \widehat{\mu}(\xi) \big|^\frac{2}{\theta} |\xi|^{\frac{s -1 + 1/(2-\varepsilon_{2})}{\theta}}|\xi|^{-\big( k + \frac{1}{2-\varepsilon_{2}} \big)} \,d\xi\\
  &= \int_{\xi \text{ in Case 4}^*} \big| \widehat{\mu}(\xi) \big|^\frac{2}{\theta} |\xi|^{\frac{s -1 +\theta + (1-\theta)/(2-\varepsilon_{2})}{\theta} -k-1} \,d\xi\\
  &\leq \J_{s -1 +\theta + \frac{1-\theta}{2-\varepsilon_{2}},\theta}(\mu)^{\frac{1}{\theta}}.
\end{align*}
Since $J_{s,1}^{4^*}(\mu)=\infty$ for $s>k + \frac{1}{2}$ that implies that, $\fs\mu \leq k - \frac{1}{2}+\theta+\frac{1-\theta}{2-\varepsilon_{2}}$ letting $\varepsilon_{1},\varepsilon_{2}\to0$ yields $\fs\mu\leq k + \frac{\theta}{2}$ as we wanted to prove. Combining this with \eqref{eq:UpperProj} implies the desired almost sure upper bound
\begin{equation*}
    \fs\mu \leq \min \big\{ k + \tfrac{\theta}{2}, 2 + k\theta \big\}
\end{equation*}
for all $\theta\in[0,1]$.


\section{A Knapp example} \label{knappproof}

Recall that the Knapp example (see e.g. \cite{mattilaFourier,Dem20}) establishes a necessary condition for restriction estimates to hold for the sphere. The idea behind it is capturing a portion of the set which will yield a bad Fourier restriction estimate, which is achieved for the sphere by choosing a small `cap' which contains a large amount of measure in a relatively flat piece of the sphere.  Recall that genuinely flat sets have no Fourier decay, and therefore no Fourier restriction estimates are possible for them, and so roughly flat pieces provide natural barriers to restriction holding below certain thresholds.  When working with smooth manifolds, similar Knapp examples  also work. For the Brownian surface, we are looking for relatively flat pieces which hold a large amount of mass, but we will do this via vertical rectangles and rely only on the H\"older exponent of Brownian motion. We now construct this example and prove Theorem~\ref{generalknapp}.

\subsection{Proof of Theorem~\ref{generalknapp}}

 Fix $s\in[0,1]^k$ and $\alpha>0$ such that $f$ is locally $\alpha$-H\"older at $s$.   Then there exists $0<\delta = \delta(s,\alpha)<1$ such that for all $t\in[0,1]^k$ with $|s-t|<\delta$,
\begin{equation*}
    |f(s) - f(t)| \lesssim |s-t|^{\alpha}.
\end{equation*}
Note that $\{ (t,f(t))  : |s-t|<\delta \}$ is then contained in a box $R$ of side-lengths $\approx \delta\times \cdots\times \delta\times \delta^{\alpha}$. Let $g = 1_{R}$ and by definition the $\mu$ mass of $R$ is $\approx \delta^k$ (the Lebesgue measure of a $\delta$-cube in the domain).  Then
\begin{equation*}
    \|g\|_{L^{p}(\mu)} = \mu(R)^{\frac{1}{p}} \approx \delta^{\frac{k}{p}}.
\end{equation*}
 Let $R^*$ be the dual rectangle of $R$, that is, the rectangle centred at the origin with sides parallel to those of $R$ and of side-lengths $\approx \delta^{-1}\times\cdots\times\delta^{-1}\times\delta^{-\alpha}$. Multiplying by $|e^{2\pi i \xi\cdot (s,f(s))}|=1$ for $\xi\in\rd$,
 \begin{equation*}
  \| \widehat{g\mu} \|_{L^q(\rd)} = \Bigg( \int_{\rd} \Bigg| \int_{R} e^{-2\pi i \xi\cdot (x-(s,f(s)))} \,d\mu(x) \Bigg|^q \,d\xi \Bigg)^{\frac{1}{q}}.
 \end{equation*}
 If $\xi\in R^*/100$ and $x\in R$, then $\Re(e^{-2\pi i \xi\cdot (x-(s,f(s)))}) \approx 1$ and $\Im(e^{-2\pi i \xi\cdot (x-(s,f(s)))}) \leq 1/10$. Since the modulus of any complex number is at least its real part,
 \begin{align*}
  \| \widehat{g\mu} \|_{L^q(\rd)} &\gtrsim \Bigg( \int_{R^*/100} \Bigg| \int_{R} e^{-2\pi i \xi\cdot (x-(s,f(s)))} \,d\mu(x) \Bigg|^q \,d\xi \Bigg)^{\frac{1}{q}}\\
  &\gtrsim \Bigg( \int_{R^*/100} \big| \mu(R)  \big|^q\, d\xi \Bigg)^{\frac{1}{q}}\\
  &\gtrsim \delta^{k}|R^*|^{\frac{1}{q}}\\
  &\approx \delta^{\frac{k(q-1) - \alpha}{q}}
\end{align*}
Therefore, if we assume \eqref{eq:extension} holds, combining the estimates above yields
\begin{equation*}
    \delta^{\frac{k(q-1) - \alpha}{q}} \lesssim \| \widehat{g\mu} \|_{L^q(\rd)} \lesssim \|g\|_{L^{p}(\mu)} \lesssim \delta^{\frac{k}{p}}
\end{equation*}
and therefore
\begin{equation*}
    p\geq \frac{kq}{k(q-1)-\alpha},
\end{equation*}
as required.

\begin{figure}[H]
  \begin{center}



\includegraphics{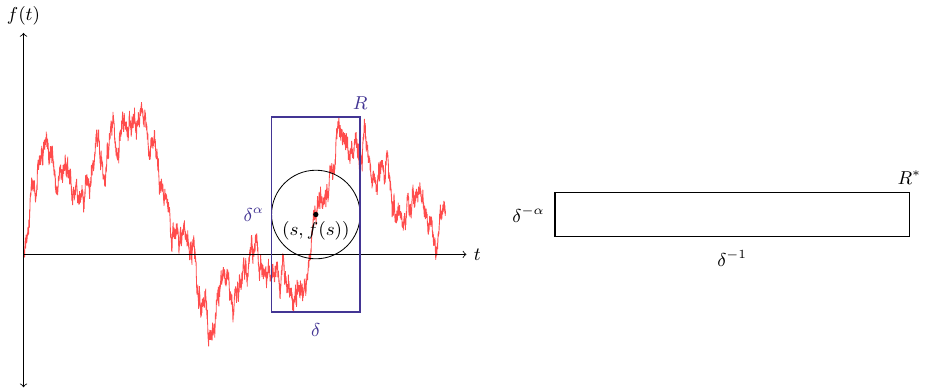}
\end{center}
\caption{Knapp example for an $\alpha$-H\"older function $f$, where $g$ is the characteristic function of the rectangle $R$ centred at $(s,f(s)) \in G(f)$ of side-lengths $\approx \delta\times\cdots\times\delta\times\delta^{\alpha}$. To the right is $R^*$ the dual rectangle of $R$.}
\end{figure}

\end{document}